\documentclass[11pt]{article}

\usepackage[T1]{fontenc}
\usepackage[utf8]{inputenc}
\usepackage{lmodern}
\usepackage[margin=1in]{geometry}
\usepackage{amsmath,amssymb,amsthm,mathtools}
\usepackage{microtype}
\usepackage{booktabs,array}
\usepackage[numbers,sort&compress]{natbib}
\usepackage[hidelinks]{hyperref}
\hypersetup{pdftitle={Spectral Gap and Cost Comparisons for Component-Wise Markov Chains},pdfauthor={Youngwoo Kwon}}

\allowdisplaybreaks[1]
\theoremstyle{plain}
\newtheorem{theorem}{Theorem}[section]
\newtheorem{lemma}[theorem]{Lemma}
\newtheorem{proposition}[theorem]{Proposition}
\newtheorem{corollary}[theorem]{Corollary}
\theoremstyle{definition}
\newtheorem{remark}[theorem]{Remark}
\newtheorem{assm}{Assumption}

\newcommand{\RCW}{\mathrm{RCW}}
\newcommand{\DCW}{\mathrm{DCW}}
\newcommand{\RSG}{\mathrm{RSG}}
\newcommand{\DSG}{\mathrm{DSG}}
\newcommand{\X}{\mathsf X}
\newcommand{\R}{\mathbb R}
\newcommand{\ip}[2]{\langle #1,#2\rangle_\Pi}
\newcommand{\pnorm}[1]{\lVert #1\rVert_\Pi}
\newcommand{\opnorm}[1]{\lVert #1\rVert}

\title{Cost Comparisons for Random and Deterministic Scans in Component-Wise Markov Chains}
\author{Youngwoo Kwon\\
\small School of Statistics, University of Minnesota}
\date{\today}

\begin{document}
\maketitle

\begin{abstract}
Gibbs samplers, and more generally component-wise Markov chain Monte Carlo algorithms such as Metropolis-within-Gibbs, can be implemented using either random-scan or deterministic-scan updates. How much convergence can depend on this choice of scanning rule has been a longstanding question. We study this problem through $L^2$ spectral gaps, measuring computational cost in units of component updates.

For a $d$-block Gibbs sampler, the cost of random scan is at most twice that of any fixed deterministic scan, while the reverse cost ratio is at most of order $d^2$. We extend these comparisons to general reversible component-wise updates under the global block-wise contraction condition. If $K_j$ denotes the update of block $j$ and $P_j$ its Gibbs counterpart, and $\|K_j-P_j\|\leq \lambda_0<1$ for $j=1,\ldots,d,$. then the cost of random scan is at most $2/(1-\lambda_0)$ times that of deterministic scan, while the reverse cost ratio is of order at most $d^2/(1-\lambda_0)$. Examples show that the cost bounds for random scan relative to deterministic scan are asymptotically sharp and that the joint dependence on $d$ and $(1-\lambda_0)^{-1}$ in the reverse comparison cannot be improved uniformly.

This work was assisted by generative AI, including for formal verification of mathematical results in Lean. The human author reviewed and verified the mathematical content and takes full responsibility for the results.
\end{abstract}

\section{Introduction}\label{sec:introduction}

Suppose that we want to generate a sample from a multivariate target distribution $\Pi$ using a Markov chain Monte Carlo (MCMC) method \citep{broo:etal:2011}. When direct sampling is difficult, we can construct a Markov chain with invariant distribution $\Pi$ by dividing the coordinates into blocks and updating one block at a time while holding the others fixed. These chains are called component-wise Markov chains. They include Gibbs samplers, which update the selected block by drawing from its full conditional distribution given the remaining blocks \citep{Geman1984,GelfandSmith1990}, and Metropolis-within-Gibbs algorithms, which replace one or more Gibbs updates with conditional Metropolis--Hastings updates \citep{metropolis1953equation,hastings1970monte,JonesRobertsRosenthal2014}. Many MCMC methods, including posterior simulation for Bayesian hierarchical models and data-augmentation algorithms, use such component-wise updates \citep{GelfandSmith1990,TannerWong1987,ascolani2024scalability}.

For a component-wise algorithm with $d$ blocks, we need to choose both the update for each block and how the block updates are performed. The latter choice is called the scanning rule \citep{johnson2013component}. Among the various scanning rules, two common choices are random scan, which selects a block uniformly and independently at each step, and deterministic scan, which updates all blocks in a fixed order during each sweep. Since a random-scan chain formed from reversible component-wise updates is itself reversible and can therefore be analyzed using standard self-adjoint theory, whereas the corresponding deterministic-scan chain is generally nonreversible and more difficult to analyze, determining which scanning rule converges faster has long been a natural and challenging question \citep{QinJones2023}.

We address this question in terms of the $L^2(\Pi)$ spectral gap. Let $K$ be a $\Pi$-invariant Markov operator on $L^2(\Pi)$ and let $\|\cdot\|$ denote the operator norm on this space. With $\Pi$ as an averaging operator $f\mapsto\Pi(f)$, define the $L^2(\Pi)$ spectral gap of $K$ by  $\eta_K=1-\|K-\Pi\|.$
If $K$ has a positive spectral gap $\eta_K>0$, then the chain is geometrically ergodic in $L^2(\Pi)$. Geometric ergodicity also yields central limit theorems under moment conditions \citep{chan:geye:1994,jones2004markov} and permits asymptotically valid Monte Carlo standard errors under suitable conditions for variance estimation \citep{jone:hara:caff:neat:2006,vats:etal:moa:2019}. In general, a larger spectral gap gives a faster geometric bound on convergence.

Many earlier studies comparing the convergence of random and deterministic scans have mainly focused on Gibbs samplers. These results show that positivity of the gap for one scanning rule implies positivity for random scan and every deterministic ordering \citep{ChlebickaLatuszynskiMiasojedow2025,GaitondeMossel2024}. Extensions to blocked and collapsed Gibbs samplers identify conditions under which gap positivity is inherited when the updates are changed \citep{mak2026extensionssolidarityprinciplespectral}. Quantitative comparisons further bound the deterministic-scan gap below by the random-scan gap divided by a factor of order $d$. Bounds for the random-scan gap in terms of the deterministic-scan gap are quadratic in the latter and incur additional polynomial and logarithmic losses, with the stronger bound requiring a common gap lower bound over all deterministic orderings \citep{GaitondeMossel2024}.

More explicit comparisons are available in particular settings. For Gaussian targets, convergence rates describe how the comparison depends on the covariance structure, blocking and ordering \citep{AmitGrenander1991,RobertsSahu1997}. In the two-component setting, an exact relationship between the asymptotic $L^2$ convergence rates of Gibbs samplers shows that deterministic scan remains at least as fast after accounting for fixed computing times of the component updates \citep{QinJones2023}. For two reversible updates, deterministic alternation also gives no larger asymptotic variance than their uniform random mixture under covariance summability, when observations are recorded after each update \citep{Andrieu2016}. Examples based on mixing times in total variation favor either scan, exhibit polynomial differences, and show that random scan can lie between the fastest and slowest deterministic orderings \citep{roberts2015surprising,he2016scan}.

When one or more Gibbs updates are replaced by general component-wise updates, such as Metropolis--Hastings steps, the random-scan and deterministic-scan chains have been analyzed by comparison with the corresponding Gibbs samplers. In the two-component setting, geometric-ergodicity implications connect Gibbs samplers and conditional Metropolis--Hastings variants \citep{QinJones2023}. Under suitable conditions on the local proposals, uniform ergodicity of a Gibbs sampler implies uniform ergodicity of the corresponding conditional Metropolis--Hastings sampler \citep{JonesRobertsRosenthal2014}.  Quantitative comparisons for reversible hybrid random-scan chains relate their absolute spectral gaps to the Gibbs gap through uniform conditional absolute spectral-gap bounds \citep{QinJuWang2025}. Conditional conductance bounds provide another comparison, including versions localized to sets of high target probability and applications to high-dimensional Bayesian models \citep{ascolani2024scalability}. For two-component deterministic-scan Metropolis-within-Gibbs chains, weak Poincar\'e inequalities give explicit subgeometric convergence bounds in terms of the Gibbs sampler and the conditional update kernels \citep{gao2026weak}.

Notably, there exists an earlier result established spectral-gap solidarity for general reversible component-wise updates under a global block-wise contraction condition, together with polynomial quantitative comparisons between random and deterministic scans \citep{kwon2026solidarity}. In particular, the earlier lower bounds for the random-scan gap are quadratic in the deterministic gap and therefore do not give a cost comparison that is uniform in the size of the deterministic gap.

In this paper, we follow this framework to substantially sharpen these comparisons.
We show that random scan can be at most a constant factor more costly than its deterministic-scan counterpart, whereas deterministic scan can be more costly than random scan by a factor of order $d^2$, where cost is measured on the reciprocal-gap scale in units of component updates. The constants depend on the local contraction parameter. For Gibbs updates, the resulting bounds are consequences of existing projection inequalities, as discussed below.

For a precise statement, write $[d]=\{1,\ldots,d\}$, $x=(x_1,\ldots,x_d)$, and $x_{-j}=(x_i)_{i\ne j}$. Let $P_j$ denote the Gibbs update of block $j$, which is conditional expectation given $x_{-j}$ and hence an orthogonal projection on $L^2(\Pi)$. Let $K_j$ be a local update of the same block that is reversible with respect to its full conditional distribution. The random-scan component-wise (RCW) and deterministic-scan component-wise (DCW) kernels, together with their random-scan Gibbs (RSG) and deterministic-scan Gibbs (DSG) counterparts, are
\begin{align}
P_{\RCW}&=\frac1d\sum_{j=1}^dK_j,
& P_{\DCW}&=K_d\cdots K_1,
\label{eq:intro-scans}\\
P_{\RSG}&=\frac1d\sum_{j=1}^dP_j,
& P_{\DSG}&=P_d\cdots P_1.
\label{eq:intro-gibbs-scans}
\end{align}
We abbreviate their $L^2(\Pi)$ gaps as $\eta_{\RCW}$, $\eta_{\DCW}$, $\eta_{\RSG}$, and $\eta_{\DSG}$.

\begin{assm}[Global block-wise contraction]\label{assm:gbc}
There exists a common $\lambda_0\in[0,1)$ such that
\begin{equation}\label{eq:gbc}
\|K_j-P_j\|\leq\lambda_0<1,
\qquad j\in[d].
\end{equation}
\end{assm}
This assumption was introduced to compare general component-wise scans through their Gibbs counterparts \citep{kwon2026solidarity}. Since $K_j$ fixes functions of $x_{-j}$, it imposes a uniform contraction on the subspace where $P_jf=0$, including control of negative local spectral values. Note that Gibbs updates simply satisfy Assumption~\ref{assm:gbc} with $\lambda_0=0$. For conditional independence Metropolis--Hastings updates, a uniform proposal-domination bound is sufficient\citep{MengersenTweedie1996}, as described in Section~\ref{sec:mwg}.

Under Assumption~\ref{assm:gbc}, a previous comparison gives 
\begin{align*}
    \eta_{\RCW}\geq c_{\RCW}\frac{(1-\lambda_0)}{(1+\lambda_0)d^4\log^2d } \, \eta_{\DCW}^2
\end{align*}
from one deterministic ordering when $d\geq8$, where $c_{\RCW}>0$ is universal. If every deterministic ordering has gap at least $\eta_{\DCW}>0$, the bound improves to 
\begin{align*}
    \eta_{\RCW}\geq c_{\RCW}\frac{(1-\lambda_0)}{(1+\lambda_0)d^2\log^2d } \, \eta_{\DCW}^2
\end{align*}
for $d\geq2$ \citep[Theorem~1.3]{kwon2026solidarity}. The following theorem is our first result, replacing these quadratic comparisons by a linear bound with dependence $d^{-1}$.

\begin{theorem}\label{thm:comparison}
Suppose Assumption~\ref{assm:gbc} holds. For any fixed deterministic ordering with $\eta_{\DCW}>0$,
\begin{equation}\label{eq:main-bound}
\eta_{\RCW}\geq
\frac{1-\lambda_0}{d}\frac{\eta_{\DCW}}{2-\eta_{\DCW}}
\geq\frac{1-\lambda_0}{2d}\eta_{\DCW}.
\end{equation}
\end{theorem}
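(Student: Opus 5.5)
The plan is to reduce the claim to a Dirichlet-form inequality for the random scan and then extract that inequality from the operator-norm bound $\|P_{\DCW}-\Pi\|=1-\eta_{\DCW}$. \textbf{Step 1} is the local structure. Each $K_j$ is self-adjoint on $L^2(\Pi)$, fixes functions of $x_{-j}$, and preserves conditional expectations given $x_{-j}$. Hence $K_jP_j=P_jK_j=P_j$, and writing $Q_j=I-P_j$ we get
\[
K_j=P_j+(K_j-P_j),\qquad K_j-P_j=Q_j(K_j-P_j)Q_j .
\]
By Assumption~\ref{assm:gbc}, the spectrum of $K_j$ restricted to $\operatorname{ran}Q_j$ lies in $[-\lambda_0,\lambda_0]$. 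This gives two facts:
\begin{equation*}
(1-\lambda_0)Q_j\le I-K_j\le(1+\lambda_0)Q_j,
\qquad
K_j\ge-\lambda_0 I .
\end{equation*}

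\textbf{Step 2} handles both ends of the spectrum of $P_{\RCW}$. Since $P_{\RCW}$ is self-adjoint, $\eta_{\RCW}$ is the smaller of two quantities. The first is $\inf\langle f,(I-P_{\RCW})f\rangle_\Pi$ over mean-zero unit $f$. The second is $1+\inf\operatorname{spec}(P_{\RCW})$. The second is at least $1-\lambda_0$, which already exceeds the right side of \eqref{eq:main-bound} because $\eta/(2-\eta)\le1$. For the first, Step 1 gives
\[
\langle f,(I-P_{\RCW})f\rangle_\Pi\ \ge\ \frac{1-\lambda_0}{d}\sum_{j=1}^d\pnorm{Q_jf}^2 .
\]
So the theorem reduces to the key inequality
\[
\sum_{j=1}^d\pnorm{Q_jf}^2\ \ge\ \frac{\eta_{\DCW}}{2-\eta_{\DCW}}\pnorm{f}^2
\qquad\text{for }\Pi(f)=0.
\]

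\textbf{Step 3} is this key inequality, and it is the main obstacle. It must be derived from the contraction of the product $T=K_d\cdots K_1$ rather than of $P_d\cdots P_1$. Set $f_0=f$ and $f_j=K_jf_{j-1}$. The first attempt telescopes as
\[
f-Tf=\sum_{j=1}^d (f_{j-1}-f_j)=\sum_{j}(I-K_j)f_{j-1},
\]
and uses $\pnorm{f-Tf}\ge\eta\pnorm f$ or $\langle f,(I-T)f\rangle_\Pi\ge\eta\pnorm f^2$. I would then control each increment through $Q_j$, and compare $Q_jf_{j-1}$ with $Q_jf$ via $f_{j-1}-f=-\sum_{i<j}(I-K_i)f_{i-1}$.

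A naive Cauchy--Schwarz here loses polynomial factors of $d$. To avoid this, I would work with the exact energy identity
\[
\pnorm{f}^2-\pnorm{Tf}^2=\sum_j\langle f_{j-1},(I-K_j^2)f_{j-1}\rangle_\Pi .
\]
The plan is to combine it with $\pnorm{Tf}^2\le(1-\eta)^2\pnorm f^2$ and the two-sided bound $\pnorm{f-Tf}\ge\eta\pnorm f$. The factor $\eta/(2-\eta)=\eta/(1+(1-\eta))$ suggests the comparison should come from an optimization of the form $\pnorm{f}-\pnorm{Tf}\ge\eta\pnorm f$ against $\pnorm f+\pnorm{Tf}\le(2-\eta)\pnorm f$. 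That is, use $\pnorm f^2-\pnorm{Tf}^2=(\pnorm f-\pnorm{Tf})(\pnorm f+\pnorm{Tf})$, or its inner-product analogue with $I\pm T$.

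So I would first prove the inequality in the Gibbs case $\lambda_0=0$, where $K_j=P_j$ and $\pnorm{f_{j-1}-f_j}^2=\pnorm{Q_jf_{j-1}}^2$ telescopes exactly. This should be the existing projection inequality the introduction alludes to. I would then try to transfer it to general $K_j$ by applying the same argument to the self-adjoint contractions $K_j$, using only $I-K_j\le(1+\lambda_0)Q_j$ and $I-K_j\ge(1-\lambda_0)Q_j$. I expect the delicate point to be ensuring that the $\lambda_0$-dependence enters only once, as the single factor $1-\lambda_0$ from Step 2, and not again inside the key inequality.
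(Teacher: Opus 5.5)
Your Step~1 and your handling of the lower end of the spectrum are correct. The gap is that Step~2 reduces the theorem to an inequality that is false, so no argument can complete Step~3. Your key inequality asks that the Gibbs Dirichlet form $\sum_j\pnorm{(I-P_j)f}^2$ dominate $\eta_{\DCW}/(2-\eta_{\DCW})$, where $\eta_{\DCW}$ is the gap of $K_d\cdots K_1$.

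Test it on the Gaussian overrelaxation example of Proposition~\ref{prop:gaussian-mh} with fixed $\lambda\in(0,1)$. Take $f_a(x)=a^\top Q_dx$ with $Q_da=\epsilon_da$, where $\epsilon_d$ is the smallest eigenvalue of $Q_d$. Then $(I-P_j)f_a=(Q_da)_j(x_j-\mu_j)$ and $\pnorm{x_j-\mu_j}=1$, so $\sum_j\pnorm{(I-P_j)f_a}^2=\epsilon_d\pnorm{f_a}^2$. On the other hand $\eta_{\DCW}\sim2(1+\lambda)\epsilon_d/(1-\lambda)$, which is the SOR acceleration. So your right side is asymptotically $\frac{1+\lambda}{1-\lambda}\epsilon_d\pnorm{f_a}^2$, and the inequality fails for all large $d$.

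The loss already occurs in Step~2. The function $f_a$ is built from conditional residuals, on which $K_j$ has eigenvalue $-\lambda$. Hence $I-K_j=(1+\lambda)(I-P_j)$ on $f_a$, and your bound $I-K_j\succeq(1-\lambda_0)(I-P_j)$ gives away the factor $(1+\lambda)/(1-\lambda)$ exactly on the function where the theorem is asymptotically attained. The true version of your key inequality carries an extra factor $(1-\lambda_0)/(1+\lambda_0)$. Your route can therefore only give $(1-\lambda_0)^2/(1+\lambda_0)$ in place of $1-\lambda_0$. The hope that $\lambda_0$ enters only once while passing through the $P_j$ cannot be realized.

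The missing idea is to stay with the quadratic forms of $I-K_j$ and never pass to $P_j$. For centered unit $f$, set $u_0=f$, $u_j=K_ju_{j-1}$ and $q_j=\pnorm{u_{j-1}}^2-\pnorm{u_j}^2=\langle u_{j-1},(I-K_j^2)u_{j-1}\rangle_\Pi$. Two facts are needed: Cauchy--Schwarz for the positive semidefinite form of $I-K_j$, and $I-K_j^2\succeq(1-\lambda_0)(I-K_j)$, which is the scalar inequality $(1-t)(t+\lambda_0)\geq0$ on $\{1\}\cup[-\lambda_0,\lambda_0]$. Together they give
\[
|\langle f,(I-K_j)u_{j-1}\rangle_\Pi|
\leq\langle f,(I-K_j)f\rangle_\Pi^{1/2}\langle u_{j-1},(I-K_j)u_{j-1}\rangle_\Pi^{1/2}
\leq\langle f,(I-K_j)f\rangle_\Pi^{1/2}\Bigl(\frac{q_j}{1-\lambda_0}\Bigr)^{1/2}.
\]
Now sum over $j$, apply Cauchy--Schwarz over $j$, and set $Q=\sum_jq_j=1-\pnorm{P_{\DCW}f}^2$. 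This gives $1-\langle f,P_{\DCW}f\rangle_\Pi\leq\{d\langle f,(I-P_{\RCW})f\rangle_\Pi\,Q/(1-\lambda_0)\}^{1/2}$. Since $\langle f,P_{\DCW}f\rangle_\Pi\leq\sqrt{1-Q}$, it follows that $\langle f,(I-P_{\RCW})f\rangle_\Pi\geq\frac{1-\lambda_0}{d}\frac{1-\sqrt{1-Q}}{1+\sqrt{1-Q}}$. Finally, $Q\geq\eta_{\DCW}(2-\eta_{\DCW})$ turns this ratio into $\eta_{\DCW}/(2-\eta_{\DCW})$. This is the factorization $(1-\sqrt{1-Q})(1+\sqrt{1-Q})=Q$ you anticipated, but the random-scan form appears directly and $\lambda_0$ enters exactly once.
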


To account for the different costs of the two scan rules, we count individual component updates \citep{RobertsSahu1997,roberts2015surprising,he2016scan}. A random-scan step uses one update, while a deterministic sweep uses $d$ updates. On the inverse spectral gap scale, the corresponding costs are $\eta_{\RCW}^{-1}$ and $d\eta_{\DCW}^{-1}$, with analogous quantities $\eta_{\RSG}^{-1}$ and $d\eta_{\DSG}^{-1}$ for Gibbs samplers. Theorem~\ref{thm:comparison} gives
\begin{equation}\label{eq:intro-cost}
\frac{\eta_{\RCW}^{-1}}{d\eta_{\DCW}^{-1}}
=\frac{\eta_{\DCW}}{d\eta_{\RCW}}
\leq\frac{2-\eta_{\DCW}}{1-\lambda_0}
\leq\frac2{1-\lambda_0}.
\end{equation}
The factor $d^{-1}$ in~\eqref{eq:main-bound} accounts for the work in a deterministic sweep, while linear dependence on $\eta_{\DCW}$ removes the deterministic gap from the uniform cost bound. Consequently, for fixed $\lambda_0$, the bound can be independent of both the number of blocks and the size of the deterministic gap.

Taking $K_j=P_j$ and $\lambda_0=0$, we obtain the corresponding results for Gibbs samplers
\begin{align}
\eta_{\RSG}&\geq\frac{\eta_{\DSG}}{d(2-\eta_{\DSG})}
\geq\frac{\eta_{\DSG}}{2d},
\label{eq:intro-gibbs}\\
\frac{\eta_{\RSG}^{-1}}{d\eta_{\DSG}^{-1}}
&\leq2-\eta_{\DSG}\leq2,
\qquad \eta_{\DSG}>0.
\label{eq:intro-gibbs-cost}
\end{align}
Thus random scan costs at most twice as much as any fixed deterministic ordering with positive gap. Factor-two comparisons also arise in slow-convergence limits of Gaussian calculations based on asymptotic rates, under additional covariance and ordering assumptions \citep{RobertsSahu1997}. Here the factor $2$ in~\eqref{eq:intro-gibbs-cost} cannot be reduced uniformly over Gibbs targets, as shown by Proposition~\ref{prop:gaussian} in Section~\ref{sec:gibbs}.

Surprisingly, closely related bounds arise in quantum information theory, where products of orthogonal projections describe sequential projective measurements. In this setting, quantum union bounds control the loss under a sequence of measurements by the sum of the individual losses. Since Gibbs updates are orthogonal projections, one such bound gives $\eta_{\RSG}\geq\eta_{\DSG}(2-\eta_{\DSG})/(4d)$ \citep{Gao2015QuantumUnion}. A sharper form, valid on separable Hilbert spaces, gives precisely~\eqref{eq:intro-gibbs} \citep[Theorem~1.3 and Remark~1.6]{ODonnellVenkateswaran2022}. Another result for products of projections bounds the deterministic-scan gap in terms of the random-scan gap, giving
\begin{equation}\label{eq:intro-detectability}
\eta_{\DSG}\geq
1-\left(1+\frac{d\eta_{\RSG}}{(d-1)^2}\right)^{-1/2}
\end{equation}
for every fixed ordering \citep[Lemma~1 and Corollary~1]{AnshuAradVidick2016}. Thus both directions of the Gibbs comparison follow from existing projection results. Section~\ref{sec:gibbs} records these specializations to interpret their costs and examine sharpness through examples.

For the general reversible local updates, our example in Section~\ref{sec:mwg} shows that the cost bound $2/(1-\lambda_0)$ is asymptotically sharp for each fixed $\lambda_0\in(0,1)$, so its dependence on $\lambda_0$ cannot be removed uniformly. When the local kernels are positive semidefinite, however, Corollary~\ref{cor:positive} gives $\eta_{\RCW}\geq\eta_{\DCW}/\{d(2-\eta_{\DCW})\}$ without requiring Assumption~\ref{assm:gbc}. In this case, random scan again costs at most twice as much as any fixed deterministic ordering with positive gap.

For a lower bound on the deterministic-scan gap in terms of the random-scan gap, the earlier comparison under Assumption~\ref{assm:gbc} is
\begin{align*}
\eta_{\DCW}\geq\frac{(1-\lambda_0)^2}{8(d+1)(1+\lambda_0)}  \,  \eta_{\RCW}
\end{align*}
 \citep[Theorem~1.1]{kwon2026solidarity}. This bound is already linear in $\eta_{\RCW}$, but its dependence on $1-\lambda_0$ is quadratic. Theorem~\ref{thm:forward-comparison} replaces the latter dependence by a linear one.
\begin{theorem}\label{thm:forward-comparison}
Suppose Assumption~\ref{assm:gbc} holds. For any fixed deterministic ordering,
\begin{equation}\label{eq:intro-forward}
\eta_{\DCW}\geq
\frac{1-\lambda_0}
{2\{2/d+(1+\lambda_0)^2(d-1)\}}\eta_{\RCW}
\geq\frac{1-\lambda_0}{8d}\eta_{\RCW}.
\end{equation}
\end{theorem}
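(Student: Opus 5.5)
The plan is to bound $\|P_{\DCW}f\|_\Pi^2$ from above for mean-zero $f$. We telescope the sweep one block at a time, use Assumption~\ref{assm:gbc} to control each step by the Gibbs projections, and then compare the accumulated decrease with the Dirichlet form of $P_{\RCW}$. Since $P_{\DCW}$ and $P_{\RCW}$ preserve the mean-zero subspace $L^2_0(\Pi)$ and $\Pi$ vanishes there, it suffices to work on $L^2_0(\Pi)$.

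\emph{Local facts.} Reversibility of $K_j$ with respect to the full conditional gives $K_jP_j=P_jK_j=P_j$. Hence $D_j=K_j-P_j=(I-P_j)K_j(I-P_j)$ is self-adjoint, supported on $\operatorname{ran}(I-P_j)$, with $\|D_j\|\le\lambda_0$. From this I will derive three inequalities for any $g$:
\begin{align*}
\|g\|_\Pi^2-\|K_jg\|_\Pi^2&\ge(1-\lambda_0^2)\|(I-P_j)g\|_\Pi^2,\\
\ip{g}{(I-K_j)g}&\le(1+\lambda_0)\|(I-P_j)g\|_\Pi^2,\\
\|(I-K_j)g\|_\Pi&\le(1+\lambda_0)\|(I-P_j)g\|_\Pi .
\end{align*}

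\emph{Telescoping.} Set $f_0=f$, $f_j=K_jf_{j-1}$ and $a_j=\|(I-P_j)f_{j-1}\|_\Pi$. The first inequality summed over $j$ gives
\[
\|f\|_\Pi^2-\|P_{\DCW}f\|_\Pi^2\ge(1-\lambda_0^2)\sum_j a_j^2 .
\]
Next I compare $(I-P_j)f$ with $(I-P_j)f_{j-1}$. Since $f-f_{j-1}=\sum_{i<j}(I-K_i)f_{i-1}$, the third inequality yields
\[
\|(I-P_j)f\|_\Pi\le a_j+(1+\lambda_0)\sum_{i<j}a_i .
\]
Applying $(x+y)^2\le 2x^2+2y^2$ and Cauchy--Schwarz, $(\sum_{i<j}a_i)^2\le (j-1)\sum_{i<j}a_i^2$. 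Summing over $j$ should give
\[
\sum_j\|(I-P_j)f\|_\Pi^2\le 2\Bigl[1+(1+\lambda_0)^2\tfrac{d(d-1)}{2}\Bigr]\sum_j a_j^2 .
\]
Controlling the double sum by $d(d-1)/2$ is where the bookkeeping must be done carefully to land on the stated constant.

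\emph{Comparison with random scan.} For mean-zero $f$, $\ip{f}{(I-P_{\RCW})f}\ge\eta_{\RCW}\|f\|_\Pi^2$, because $1-\langle f,P_{\RCW}f\rangle\ge 1-\|P_{\RCW}-\Pi\|$. Combining this with the second inequality gives
\[
d\,\eta_{\RCW}\|f\|_\Pi^2\le(1+\lambda_0)\sum_j\|(I-P_j)f\|_\Pi^2 .
\]
Chaining the three displays and dividing by $1-\lambda_0^2$, I expect
\[
1-\frac{\|P_{\DCW}f\|_\Pi^2}{\|f\|_\Pi^2}\ge x:=\frac{(1-\lambda_0)\eta_{\RCW}}{2\{1/d+(1+\lambda_0)^2(d-1)/2\}} .
\]
Therefore $\|P_{\DCW}-\Pi\|\le\sqrt{1-x}\le 1-x/2$, which gives exactly the first bound in \eqref{eq:intro-forward}. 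The second bound follows from $2/d+(1+\lambda_0)^2(d-1)\le 4d$.

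The main obstacle is the cross-term step: the drift $f-f_{j-1}$ accumulated before block $j$ must be charged to the earlier local decreases $a_i$ with only linear loss in $1-\lambda_0$. This works because the third local inequality uses $1+\lambda_0$ rather than $(1-\lambda_0)^{-1}$, so the only factor $1-\lambda_0$ enters through $1-\lambda_0^2$ in the telescoping.
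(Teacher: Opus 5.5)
Your proposal is correct and follows essentially the same route as the paper. Both arguments use the sweep trajectory and telescoped squared-norm losses. Both bound the displacement $f-f_{j-1}=\sum_{i<j}(I-K_i)f_{i-1}$ by Cauchy--Schwarz with the factor $j-1$, split via $(x+y)^2\le 2x^2+2y^2$, average using $\sum_j(j-1)=d(d-1)/2$, and finish with $\sqrt{1-x}\le 1-x/2$; you land on exactly the paper's constant. The only difference is bookkeeping: you charge everything to $a_j=\|(I-P_j)f_{j-1}\|_\Pi$ through the decomposition $g=P_jg+(I-P_j)g$. The paper instead works directly with the losses $q_j$ through the spectral operator inequalities of Lemma~\ref{lem:dissipation}. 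Those inequalities produce the same factors as yours: $(1+\lambda_0)/(1-\lambda_0)$ for the squared increments and $1/(1-\lambda_0)$ for the local Dirichlet form.
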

Converting this gap bound to component-update costs gives, whenever $\eta_{\RCW}>0$,
\begin{equation}\label{eq:deterministic-cost}
\frac{d\eta_{\DCW}^{-1}}{\eta_{\RCW}^{-1}}
=\frac{d\eta_{\RCW}}{\eta_{\DCW}}
\leq\frac{4+2(1+\lambda_0)^2d(d-1)}{1-\lambda_0}
\leq\frac{8d^2}{1-\lambda_0}.
\end{equation}
Thus, for fixed $\lambda_0$, the bound allows deterministic scan to be quadratically more costly in the number of blocks, whereas the random-scan cost bound in~\eqref{eq:intro-cost} is independent of that number. Sections~\ref{sec:gibbs} and~\ref{sec:mwg} compute the norm-gap cost asymptotics for the previously studied discrete pyramid example \citep{roberts2015surprising,GaitondeMossel2024} and its Metropolis variant. These show that the orders $d^2$ and $d^2/(1-\lambda_0)$, respectively, cannot be improved uniformly, with the latter dependence sharp jointly in $d$ and $\lambda_0$.

Section~\ref{sec:setup} introduces the notation and assumptions. Section~\ref{sec:gibbs} records the Gibbs specializations and analyzes examples of their sharpness. Section~\ref{sec:mwg} treats Metropolis-within-Gibbs algorithms and the dependence on the local contraction parameter. Section~\ref{sec:general} proves the general comparisons and the results for positive semidefinite local updates. Section~\ref{sec:discussion} discusses the scope of the results. Appendix~\ref{app:gaussian} gives the Gaussian calculations. Appendix~\ref{app:pyramid} gives the Gibbs and Metropolis pyramid calculations.

This project was initiated by the human author, who proposed using squared norm loss, motivated by related quantum-algorithmic considerations, to sharpen the quantitative comparisons between random and deterministic scans established in earlier joint work \citep{kwon2026solidarity}. Generative AI assisted in developing this idea into the proof strategy, deriving examples, and drafting and revising the manuscript. The human author reviewed the literature, scrutinized and verified the mathematical arguments and computations, and takes full responsibility for the article.

\section{Notation and problem setup}\label{sec:setup}

We follow the component-wise framework and notation of the earlier joint paper \citep{kwon2026solidarity}. Let $d\geq2$ and $[d]=\{1,\ldots,d\}$. For each $j\in[d]$, let $(\X_j,\mathcal B_j)$ be a Polish space equipped with its Borel $\sigma$-algebra. Set $\X=\prod_{j=1}^d\X_j$ and $\mathcal B=\bigotimes_{j=1}^d\mathcal B_j$. For $x\in\X$, write $x=(x_1,\ldots,x_d)$ and $x_{-j}=(x_i)_{i\neq j}$.

Let $\Pi$ be a probability measure on $(\X,\mathcal B)$. The real Hilbert space $L^2(\Pi)$ has inner product $\ip{f}{g}:=\int fg\,d\Pi$ and norm $\pnorm{f}:=\ip{f}{f}^{1/2}$. For $f\in L^2(\Pi)$, write $\Pi(f):=\int f\,d\Pi$, and use the same symbol $\Pi$ for the averaging operator $(\Pi f)(x):=\Pi(f)$. Thus $\Pi$ is the orthogonal projection onto the subspace of constant functions. Let $L^2_0(\Pi):=\{f\in L^2(\Pi):\Pi(f)=0\}$ and assume that it contains a nonzero function.

Let $K$ be a Markov kernel on $(\X,\mathcal B)$ with invariant distribution $\Pi$. Its associated Markov operator on $L^2(\Pi)$ is defined by
\begin{align*}
(Kf)(x):=\int f(y)K(x,dy),\qquad f\in L^2(\Pi).
\end{align*}
For a bounded linear operator $T$ on $L^2(\Pi)$, define $\opnorm{T}:=\sup\{\pnorm{Tf}:f\in L^2(\Pi),\,\pnorm{f}=1\}$. Since $K$ is a $\Pi$-invariant Markov kernel, it is a contraction on $L^2(\Pi)$ and satisfies $K\Pi=\Pi K=\Pi$. Throughout, the $L^2(\Pi)$ spectral gap of $K$ refers to the norm gap $\eta_K:=1-\opnorm{K-\Pi}$. In particular,
\begin{align}\label{eq:centered-norm}
\opnorm{K-\Pi}=\sup_{\substack{f\in L^2_0(\Pi), \, \pnorm{f}=1}}\pnorm{Kf}.
\end{align}
A Markov kernel $K$ is reversible with respect to $\Pi$ if it satisfies the detailed balance condition
\begin{align*}
\Pi(dx)K(x,dy)=\Pi(dy)K(y,dx).
\end{align*}
Equivalently, the associated Markov operator is self-adjoint on $L^2(\Pi)$, that is, $\ip{f}{Kg}=\ip{Kf}{g}$ for all $f,g\in L^2(\Pi)$.

For each $j\in[d]$, let $\Pi_{-j}$ denote the marginal distribution of $x_{-j}$ and let $\Pi_j(\cdot\mid x_{-j})$ be a regular conditional distribution of $x_j$ given $x_{-j}$, so that
\begin{align*}
\Pi(dx_j,dx_{-j})=\Pi_j(dx_j\mid x_{-j})\Pi_{-j}(dx_{-j}).
\end{align*}
For $\Pi_{-j}$-almost every $x_{-j}$, let $K_{j,x_{-j}}$ be a Markov kernel on $(\X_j,\mathcal B_j)$ reversible with respect to $\Pi_j(\cdot\mid x_{-j})$. Assume that $(x_j,x_{-j})\mapsto K_{j,x_{-j}}(x_j,A_j)$ is measurable for every $A_j\in\mathcal B_j$. Define the component-wise kernel $K_j$ on $(\X,\mathcal B)$ by
\begin{align*}
K_j\bigl((x_j,x_{-j}),d(y_j,y_{-j})\bigr):=K_{j,x_{-j}}(x_j,dy_j)\delta_{x_{-j}}(dy_{-j}),
\end{align*}
where $\delta_{x_{-j}}$ denotes the point mass at $x_{-j}$. Then $K_j$ is reversible with respect to $\Pi$ and hence self-adjoint on $L^2(\Pi)$.

The Gibbs update of block $j$ is the kernel
\begin{align*}
P_j\bigl((x_j,x_{-j}),d(y_j,y_{-j})\bigr):=\Pi_j(dy_j\mid x_{-j})\delta_{x_{-j}}(dy_{-j}).
\end{align*}
Its associated operator is the conditional expectation
\begin{align*}
(P_jf)(x_j,x_{-j})=\int f(y_j,x_{-j})\Pi_j(dy_j\mid x_{-j}).
\end{align*}
Hence $P_j$ is the orthogonal projection onto the subspace of $x_{-j}$-measurable functions. Since $K_j$ fixes every function in this subspace, $K_jP_j=P_j$. Self-adjointness then gives
\begin{align}\label{eq:anchor}
K_jP_j=P_jK_j=P_j.
\end{align}
These identities also follow from \citet[Lemma~2.1]{kwon2026solidarity}.

The random-scan component-wise kernel and deterministic-scan component-wise kernel are defined by
\begin{align*}
P_{\RCW}:=\frac1d\sum_{j=1}^dK_j,\qquad P_{\DCW}:=K_d\cdots K_1.
\end{align*}
The corresponding random-scan Gibbs and deterministic-scan Gibbs kernels are
\begin{align*}
P_{\RSG}:=\frac1d\sum_{j=1}^dP_j,\qquad P_{\DSG}:=P_d\cdots P_1.
\end{align*}
We take $1,\ldots,d$ as the deterministic-scan ordering. Any other fixed ordering is covered by relabeling the blocks.

We measure computational cost in units of component updates. A random-scan step uses one component update, whereas a deterministic sweep uses $d$ component updates. On the reciprocal-gap scale, we therefore compare $\eta_{\RCW}^{-1}$ with $d\eta_{\DCW}^{-1}$, and $\eta_{\RSG}^{-1}$ with $d\eta_{\DSG}^{-1}$ for Gibbs samplers.

For $\lambda_0\in[0,1)$, the component-wise updates are said to satisfy Assumption~\ref{assm:gbc} with parameter $\lambda_0$ if $\opnorm{K_j-P_j}\leq\lambda_0  $, for $j\in[d].$
By~\eqref{eq:anchor}, $K_j-P_j=K_j(I-P_j)$ and vanishes on the range of $P_j$. Thus $\opnorm{K_j-P_j}$ is the operator norm of $K_j$ on the orthogonal complement of the $x_{-j}$-measurable functions, and Assumption~\ref{assm:gbc} gives a common contraction bound for these conditional mean-zero components.

\section{Gibbs samplers}\label{sec:gibbs}

For Gibbs updates, $K_j=P_j$ and Assumption~\ref{assm:gbc} holds with $\lambda_0=0$. The following corollary records the resulting specializations of Theorems~\ref{thm:comparison} and~\ref{thm:forward-comparison}. 

\begin{corollary}[Gibbs specialization]\label{cor:gibbs}
For any fixed deterministic ordering,
\begin{equation}\label{eq:gibbs-gap}
\eta_{\RSG}\geq\frac{\eta_{\DSG}}{d(2-\eta_{\DSG})}.
\end{equation}
If $\eta_{\DSG}>0$, then
\begin{equation}\label{eq:gibbs-cost}
\frac{\eta_{\RSG}^{-1}}{d\eta_{\DSG}^{-1}}
\leq2-\eta_{\DSG}\leq2.
\end{equation}
Conversely,
\begin{equation}\label{eq:gibbs-gap-2}
\eta_{\DSG}\geq
\frac{\eta_{\RSG}}{2\{2/d+d-1\}}.
\end{equation}
If $\eta_{\RSG}>0$, then
\begin{equation}\label{eq:gibbs-cost-2}
\frac{d\eta_{\DSG}^{-1}}{\eta_{\RSG}^{-1}}
\leq4+2d(d-1)\leq2d^2.
\end{equation}
\end{corollary}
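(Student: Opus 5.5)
The corollary follows by specializing Theorems~\ref{thm:comparison} and~\ref{thm:forward-comparison} to $K_j=P_j$. I first check that Assumption~\ref{assm:gbc} holds with $\lambda_0=0$. Each $P_j$ is an orthogonal projection, and the $P_j$ are reversible updates of the required form, with $K_{j,x_{-j}}(x_j,\cdot)=\Pi_j(\cdot\mid x_{-j})$. Hence $K_j-P_j=0$ and $\opnorm{K_j-P_j}=0$. With this choice $P_{\RCW}=P_{\RSG}$ and $P_{\DCW}=P_{\DSG}$, so $\eta_{\RCW}=\eta_{\RSG}$ and $\eta_{\DCW}=\eta_{\DSG}$.

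For \eqref{eq:gibbs-gap}, there are two cases. If $\eta_{\DSG}>0$, substituting $\lambda_0=0$ into \eqref{eq:main-bound} gives exactly $\eta_{\RSG}\geq\eta_{\DSG}/\{d(2-\eta_{\DSG})\}$. If $\eta_{\DSG}=0$, the right-hand side is $0$. Since $\eta_{\RSG}=1-\opnorm{P_{\RSG}-\Pi}\geq0$ because $P_{\RSG}$ is a Markov contraction, the inequality holds trivially. The gap cannot be negative, since $\opnorm{K-\Pi}\leq1$ by \eqref{eq:centered-norm}. For \eqref{eq:gibbs-cost}, I take reciprocals when $\eta_{\DSG}>0$. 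Then \eqref{eq:gibbs-gap} forces $\eta_{\RSG}>0$, and rearranging gives $\eta_{\DSG}/(d\eta_{\RSG})\leq2-\eta_{\DSG}\leq2$, which is \eqref{eq:intro-cost} with $\lambda_0=0$.

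For the converse direction, I put $\lambda_0=0$ in the first inequality of \eqref{eq:intro-forward}. This gives $\eta_{\DSG}\geq\eta_{\RSG}/[2\{2/d+(d-1)\}]$, which is \eqref{eq:gibbs-gap-2}. If $\eta_{\RSG}>0$, the right-hand side is positive, so $\eta_{\DSG}>0$. Inverting then gives
\[
\frac{d\,\eta_{\RSG}}{\eta_{\DSG}}\leq 2d\{2/d+d-1\}=4+2d(d-1).
\]
The final step is the elementary bound $4+2d(d-1)=2d^2-2d+4\leq2d^2$. This is equivalent to $d\geq2$, which holds by the standing assumption.

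There is no substantive obstacle here. The only points needing care are the degenerate case $\eta_{\DSG}=0$ in \eqref{eq:gibbs-gap} and confirming that the Gibbs kernels satisfy the reversibility and measurability hypotheses of Section~\ref{sec:setup}, both of which are immediate.
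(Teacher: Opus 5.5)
Your proposal is correct and matches the paper's approach: it specializes Theorems~\ref{thm:comparison} and~\ref{thm:forward-comparison} with $K_j=P_j$ and $\lambda_0=0$. You also handle the degenerate case $\eta_{\DSG}=0$ in \eqref{eq:gibbs-gap} and verify $4+2d(d-1)\leq 2d^2$ for $d\geq2$, two details the paper leaves implicit.
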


\begin{remark}
Both directions for Gibbs updates are already available from projection inequalities. The first follows from a sharp quantum union bound \citep[Theorem~1.3 and Remark~1.6]{ODonnellVenkateswaran2022}, and a stronger bound in the opposite direction follows from an existing product-of-projections inequality \citep[Lemma~1 and Corollary~1]{AnshuAradVidick2016}. 
\end{remark}

\subsection{Product targets}

\begin{proposition}\label{prop:product}
Let $\Pi=\Pi_1\otimes\cdots\otimes\Pi_d$, and suppose at least one marginal admits a nonzero square-integrable function with mean zero. Then
\begin{equation}\label{eq:product}
P_{\DSG}=\Pi,
\qquad \eta_{\DSG}=1,
\qquad \eta_{\RSG}=\frac1d.
\end{equation}
\end{proposition}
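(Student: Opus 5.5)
The argument uses the product structure directly. Under $\Pi=\Pi_1\otimes\cdots\otimes\Pi_d$, the regular conditional distribution of $x_j$ given $x_{-j}$ can be taken to be $\Pi_j$ itself. Hence
\[
(P_jf)(x)=\int f(y_j,x_{-j})\,\Pi_j(dy_j).
\]
In words, $P_j$ integrates out the $j$th coordinate against its marginal.

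\textbf{Step 1: $P_{\DSG}=\Pi$.} Applying $P_1$, then $P_2$, and so on up to $P_d$ integrates out each coordinate in turn. By Fubini's theorem, which applies since $f\in L^2(\Pi)\subset L^1(\Pi)$, the result is
\[
(P_{\DSG}f)(x)=\int f\,d(\Pi_1\otimes\cdots\otimes\Pi_d)=\Pi(f).
\]
The only care needed is measurability and a.e.\ finiteness of the partial integrals, which Fubini--Tonelli provides. Therefore $\opnorm{P_{\DSG}-\Pi}=0$ and $\eta_{\DSG}=1$.

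\textbf{Step 2: an orthogonal decomposition.} To compute $\eta_{\RSG}$, note that the $P_j$ are commuting orthogonal projections; commutation also follows from Fubini. For $S\subseteq[d]$, set $Q_S=\prod_{j\in S}(I-P_j)\prod_{j\notin S}P_j$. These are mutually orthogonal projections summing to $I$, and $Q_\emptyset=P_{\DSG}=\Pi$. This is the Efron--Stein (ANOVA) decomposition.

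\textbf{Step 3: the spectrum of $P_{\RSG}$.} On the range of $Q_S$, each $P_j$ acts as $1$ if $j\notin S$ and as $0$ if $j\in S$. So $P_{\RSG}$ acts there as the scalar $1-|S|/d$. Restricted to $L^2_0(\Pi)$, which is the direct sum of the ranges of $Q_S$ over $S\neq\emptyset$, $P_{\RSG}$ is self-adjoint and positive semidefinite with norm
\[
\max\{\,1-|S|/d:\ S\neq\emptyset,\ Q_S\neq0\,\}.
\]

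\textbf{Step 4: the maximum is attained at a singleton.} This is the step where the hypothesis is used, and it is the only real obstacle. By assumption some marginal $\Pi_k$ admits a nonzero $g\in L^2(\Pi_k)$ with $\Pi_k(g)=0$. Set $f(x)=g(x_k)$. Then $P_kf=0$ and $P_jf=f$ for $j\neq k$, so $f$ is a nonzero element of the range of $Q_{\{k\}}$ and $P_{\RSG}f=(1-1/d)f$. Combined with the bound $1-|S|/d\le 1-1/d$ for every nonempty $S$, this gives
\[
\opnorm{P_{\RSG}-\Pi}=1-1/d,
\]
and hence $\eta_{\RSG}=1/d$.
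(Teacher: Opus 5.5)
Your proof is correct. For the upper bound $\eta_{\RSG}\leq 1/d$ it coincides with the paper's, since both use the single-coordinate centered function $f(x)=g(x_k)$ with $P_{\RSG}f=(1-1/d)f$.

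The difference is in the lower bound $\eta_{\RSG}\geq 1/d$. The paper gets it in one line by substituting $\eta_{\DSG}=1$ into the general Gibbs comparison \eqref{eq:gibbs-gap}, that is, Theorem~\ref{thm:comparison} with $\lambda_0=0$. This is also how it presents the product target as an equality case of that bound. You instead use that the $P_j$ commute under a product measure and diagonalize $P_{\RSG}$ through the Efron--Stein decomposition, $P_{\RSG}-\Pi=\sum_{S\neq\emptyset}(1-|S|/d)Q_S$.

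Your route is self-contained and independent of the main theorem, so it gives an independent check of the sharpness claim. It also yields the entire spectrum of $P_{\RSG}$ rather than just its top, including nonnegativity of the lower end, which the general theorem must control separately. The paper's route is shorter and needs no commutativity, but it relies on the machinery of Section~\ref{sec:general}.
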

\begin{proof}
The Gibbs updates integrate out independent coordinates, so $P_d\cdots P_1=\Pi$ and $\eta_{\DSG}=1$. Equation~\eqref{eq:gibbs-gap} then gives $\eta_{\RSG}\geq1/d$. Choose a nonzero centered function $f$ depending only on a coordinate $x_k$. Then $P_kf=0$ and $P_jf=f$ for $j\ne k$. Hence $P_{\RSG}f=(d-1)f/d$, which gives $\eta_{\RSG}\leq1/d$.
\end{proof}
The product target gives equality in~\eqref{eq:gibbs-gap}, since
\begin{equation*}
\frac{\eta_{\DSG}}{d(2-\eta_{\DSG})}
=\frac1{d(2-1)}=\frac1d=\eta_{\RSG}.
\end{equation*}

\subsection{Gaussian sharpness example}

Consider $\Pi_d=N(0,Q_d^{-1})$, where
\begin{equation}\label{eq:gaussian-target}
Q_d=
\begin{pmatrix}
1&-1/2&&&0\\
-1/2&1&-1/2&&\\
&\ddots&\ddots&\ddots&\\
&&-1/2&1&-1/2\\
0&&&-1/2&1
\end{pmatrix}.
\end{equation}
Its eigenvalues are $1-\cos(k\pi/(d+1))$, $k\in[d]$, so $Q_d$ is positive definite \citep{NoschesePasquiniReichel2013}. Each full conditional is normal with variance one. Its mean is $(x_{j-1}+x_{j+1})/2$, where $x_0=x_{d+1}=0$.

\begin{proposition}\label{prop:gaussian}
For the Gibbs sampler targeting~\eqref{eq:gaussian-target},
\begin{equation}\label{eq:gaussian-sharp}
\frac{\eta_{\RSG}^{-1}}{d\eta_{\DSG}^{-1}}\longrightarrow2
\qquad\text{as }d\to\infty.
\end{equation}
\end{proposition}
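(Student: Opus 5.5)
The plan is to compute both gaps to leading order in $d$, with $\eta_{\RSG}\sim\pi^2/(2d^3)$ and $\eta_{\DSG}\sim\pi^2/d^2$. Their ratio then gives $\eta_{\DSG}/(d\eta_{\RSG})\to2$, which is~\eqref{eq:gaussian-sharp}.

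\textbf{Step 1: reduce to linear functions.} For a Gaussian target, each $P_j$ maps polynomials of degree at most $k$ to polynomials of degree at most $k$. Decompose $L^2_0(\Pi_d)$ into Wiener chaoses, i.e.\ multivariate Hermite polynomials orthogonalized under $\Pi_d$. Each $P_j$, and hence $P_{\RSG}$ and the symmetric sweep $P_{\DSG}^*P_{\DSG}=P_1\cdots P_{d-1}P_dP_{d-1}\cdots P_1$, preserves each chaos. On the $k$-th chaos each acts as a symmetric tensor power of its action on the first chaos (linear functions). I would cite the classical Gaussian results \citep{AmitGrenander1991,RobertsSahu1997} for this, so the norm of every such operator on $L^2_0$ equals its norm on linear functions $f(x)=a^\top x$. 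Since $P_{\RSG}$ is self-adjoint and positive semidefinite, and $\|P_{\DSG}-\Pi\|^2=\|P_{\DSG}^*P_{\DSG}-\Pi\|$, both gaps become finite-dimensional eigenvalue problems. Justifying this reduction rigorously, in particular for the nonreversible $P_{\DSG}$ via $P_{\DSG}^*P_{\DSG}$, is the step I expect to need the most care. I would state it as a lemma on second quantization, using that each chaos component is $\Gamma(T)$ for a contraction $T$.

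\textbf{Step 2: random scan.} On linear functions, $P_j$ replaces $x_j$ by $(x_{j-1}+x_{j+1})/2$. A direct computation in the covariance inner product gives $1-\|P_{\RSG}-\Pi\|=\lambda_{\min}(Q_d)/d$, because $Q_d$ has unit diagonal. Hence $\eta_{\RSG}=\{1-\cos(\pi/(d+1))\}/d\sim\pi^2/(2d^3)$.

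\textbf{Step 3: deterministic scan.} On linear functions, $P_{\DSG}^*P_{\DSG}$ is the symmetric Gauss--Seidel iteration. Write $Q_d=I+L+U$, with $L$ strictly lower triangular and $U=L^\top$. Then $1-\|P_{\DSG}^*P_{\DSG}-\Pi\|=\lambda_{\min}(M^{-1}Q_d)$, where
\[
M=(I+L)(I+U)=Q_d+LU .
\]
Call this quantity $g$. By the Rayleigh quotient,
\[
g=\min_v \frac{v^\top Q_dv}{v^\top Q_dv+\|Uv\|^2}.
\]
For the lower bound, use $\|Uv\|^2=\tfrac14\sum_{j\ge2}v_j^2\le\|v\|^2/4$ and $v^\top Q_dv\ge\lambda_{\min}(Q_d)\|v\|^2$. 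Since $t\mapsto t/(t+c)$ is increasing, this gives $g\ge \lambda_{\min}/(\lambda_{\min}+1/4)$. For the upper bound, take the sine eigenvector $v_j=\sin(j\pi/(d+1))$. For this $v$, $\|Uv\|^2=\tfrac14\|v\|^2\{1-O(d^{-1})\}$, because $v_1^2/\|v\|^2=O(d^{-3})$. Hence $g\sim4\lambda_{\min}(Q_d)\sim2\pi^2/d^2$. Then
\[
\eta_{\DSG}=1-\sqrt{1-g}\sim g/2\sim\pi^2/d^2 .
\]

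\textbf{Step 4: combine.} From Steps 2 and 3,
\[
\frac{\eta_{\RSG}^{-1}}{d\eta_{\DSG}^{-1}}=\frac{\eta_{\DSG}}{d\,\eta_{\RSG}}\sim\frac{\pi^2/d^2}{\pi^2/(2d^2)}=2 ,
\]
which is~\eqref{eq:gaussian-sharp}.
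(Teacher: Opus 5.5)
Your proposal is correct in substance and reaches the same asymptotics as the paper, $\eta_{\DSG}\sim2\epsilon_d$ and $\eta_{\RSG}\sim\epsilon_d/d$ with $\epsilon_d=1-\cos(\pi/(d+1))$. It gets there by a partly different route.

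\textbf{Deterministic scan.} The paper proves a norm identity for an arbitrary stationary Gaussian transition, via the Hermite singular value decomposition between the two marginals. It therefore works with the nonreversible $P_{\DSG}$ directly. It then evaluates $1-\|B\|_Q^2=1/\|Q^{-1/2}(I+L)\|_2^2$ from the identity $Q-B^\top QB=Q(I+L)^{-\top}(I+L)^{-1}Q$ (the case $\lambda=0$ of~\eqref{eq:gs-energy}), followed by a triangle-inequality sandwich. Your passage to $P_{\DSG}^*P_{\DSG}=\Gamma(T)$ and the symmetric Gauss--Seidel Rayleigh quotient computes the same number, because $Q^{-1}(I+L)(I+U)$ is similar to $(I+U)Q^{-1}(I+L)$. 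Your formula $\|Uv\|^2=\tfrac14\sum_{j\ge2}v_j^2$ gives a very explicit two-sided estimate. The paper's form has the advantage of covering the overrelaxed updates of Proposition~\ref{prop:gaussian-mh} in the same computation.

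\textbf{Random scan.} This is the more substantive difference. The paper never computes $\eta_{\RSG}$ exactly. It uses the sine eigenvector only for $\eta_{\RSG}\le\epsilon_d/d$ and takes the reverse inequality from Corollary~\ref{cor:gibbs}, which sandwiches the ratio between $\eta_{\DSG}/\epsilon_d$ and $2-\eta_{\DSG}$.

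Your version is independent of the main theorem, but it then needs the reduction to linear functions for $P_{\RSG}$ as well, and your proposed second-quantization lemma does not deliver it. Let $p_j$ be the projection of the first chaos onto the span of $\{x_i\}_{i\ne j}$. Then $P_{\RSG}=\frac1d\sum_j\Gamma(p_j)$ is a mixture rather than $\Gamma$ of a single contraction, and it is not a Gaussian transition either. On the $k$th chaos it acts as $\frac1d\sum_jp_j^{\odot k}$, not as a tensor power of $R=\frac1d\sum_jp_j$. So your sentence that $P_{\RSG}$ acts on each chaos as a symmetric tensor power is false as written.

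The conclusion is still true and easy to obtain. Since $0\preceq p_j\preceq I$, one has $p_j^{\otimes k}\preceq p_j\otimes I^{\otimes(k-1)}$. Hence, on the $k$th chaos viewed inside the symmetric tensors, $0\preceq P_{\RSG}\preceq R\otimes I^{\otimes(k-1)}$, whose norm is $\|R\|=1-\epsilon_d/d$. Add that line, or invoke Corollary~\ref{cor:gibbs} as the paper does, and your proof is complete.
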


The proof is given in Appendix~\ref{app:gaussian}. Thus the factor two in~\eqref{eq:gibbs-cost} cannot be reduced uniformly over targets and dimensions. This agrees with the factor-two behavior observed in earlier Gaussian scan comparisons \citep{RobertsSahu1997}.

\subsection{The discrete pyramid}\label{sec:discrete-pyramid-gibbs}

To show that the quadratic dependence on $d$ in~\eqref{eq:gibbs-cost-2} cannot be improved uniformly, consider the discrete pyramid \citep{roberts2015surprising,GaitondeMossel2024} with target distribution $\Pi$ uniform on
\begin{equation}\label{eq:pyramid-target}
\X_d=\{0,e_1,\ldots,e_d\}\subseteq\{0,1\}^d.
\end{equation}

Identify a function with its values $f=(f_0,f_1,\ldots,f_d)$ at these states. The Gibbs update of coordinate $j$ averages the entries at $0$ and $e_j$ and leaves the others unchanged. Thus
\begin{equation}\label{eq:pyramid-local}
(P_jf)_0=(P_jf)_j=\frac{f_0+f_j}{2},
\qquad (P_jf)_i=f_i\quad(i\notin\{0,j\}).
\end{equation}

\begin{proposition}\label{prop:pyramid}
For~\eqref{eq:pyramid-local},
\begin{equation}\label{eq:pyramid-gaps}
\eta_{\RSG}=\frac1{2d},
\qquad \eta_{\DSG}\sim\frac{\pi^2}{d^2}.
\end{equation}
Consequently,
\begin{equation}\label{eq:pyramid-cost}
\frac{d\eta_{\DSG}^{-1}}{\eta_{\RSG}^{-1}}
\sim\frac{d^2}{2\pi^2}.
\end{equation}
\end{proposition}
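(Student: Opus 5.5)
The identity $\eta_{\RSG}=1/(2d)$ comes from diagonalizing the self-adjoint operator $P_{\RSG}$. The asymptotic $\eta_{\DSG}\sim\pi^2/d^2$ comes from an energy identity that turns $\opnorm{P_{\DSG}-\Pi}$ into a Neumann-type Dirichlet-form minimization along the scan. The cost statement~\eqref{eq:pyramid-cost} then follows by dividing.

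\emph{Random scan.} Since $\Pi$ is uniform, the weight $1/(d+1)$ in $\ip{\cdot}{\cdot}$ is common to all terms and cancels in norm ratios, so we work with the Euclidean norm on $\R^{d+1}$. By~\eqref{eq:pyramid-local},
\begin{equation*}
(P_{\RSG}f)_0=\frac{f_0}{2}+\frac1{2d}\sum_{i=1}^d f_i,
\qquad
(P_{\RSG}f)_i=\Bigl(1-\frac1{2d}\Bigr)f_i+\frac{f_0}{2d}.
\end{equation*}
The operator leaves two subspaces invariant:
\begin{itemize}
\item On $\{f_0=0,\ \sum_i f_i=0\}$, which has dimension $d-1$ and lies in $L^2_0(\Pi)$, it acts as $1-1/(2d)$.
\item On the two-dimensional span of $(f_0,c,\dots,c)$ it has matrix $\begin{pmatrix}1/2&1/2\\1/(2d)&1-1/(2d)\end{pmatrix}$, with eigenvalues $1$ (constants) and $1/2-1/(2d)$.
\end{itemize}
Since $P_{\RSG}$ is self-adjoint, $\opnorm{P_{\RSG}-\Pi}=\max\{1-1/(2d),\,|1/2-1/(2d)|\}=1-1/(2d)$. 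Hence $\eta_{\RSG}=1/(2d)$.

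\emph{Deterministic scan: reduction.} Write $T=P_{\DSG}$. Tracking~\eqref{eq:pyramid-local} through the sweep, set $a_0=f_0$ and $a_j=(a_{j-1}+f_j)/2$ for $j\in[d]$. Then $(Tf)_j=a_j$ for $j\ge1$ and $(Tf)_0=a_d$. Because $T$ is $\Pi$-invariant, it maps $L^2_0$ into itself, so $\opnorm{T-\Pi}$ is the norm of $T$ on $L^2_0$.

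Let $h_j=P_j\cdots P_1f$. Each $P_j$ is an orthogonal projection that replaces the pair $(a_{j-1},f_j)$ by its average, so it removes squared norm $\pnorm{(I-P_j)h_{j-1}}^2$, proportional to $(a_{j-1}-f_j)^2/2$. Summing over the sweep,
\begin{equation*}
\|f\|^2-\|Tf\|^2=\frac12\sum_{j=1}^d(a_{j-1}-f_j)^2=2\sum_{j=1}^d(a_j-a_{j-1})^2 .
\end{equation*}
Now change variables, which is bijective: $f_0=a_0$ and $f_j=2a_j-a_{j-1}$. The mean-zero constraint becomes $\sum_{j=1}^d a_j+a_d=0$. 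Therefore
\begin{equation*}
1-\opnorm{T-\Pi}^2=\min\frac{2\sum_{j=1}^d(a_j-a_{j-1})^2}{a_0^2+\sum_{j=1}^d(a_j+(a_j-a_{j-1}))^2},
\end{equation*}
where the minimum is over nonzero $(a_0,\dots,a_d)$ satisfying that single linear constraint.

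\emph{Deterministic scan: asymptotics.} The claim to prove is that this minimum is $\sim2\pi^2/d^2$. Then $\eta_{\DSG}=1-\sqrt{1-2\pi^2/d^2\,(1+o(1))}\sim\pi^2/d^2$.
\begin{itemize}
\item \emph{Upper bound.} Plug in the test vector $a_j=\cos(\pi j/d)$, corrected by a small constant shift so the constraint holds exactly. The numerator is $\approx2\cdot\pi^2/(2d)$. The denominator is $\approx d/2$, because the cross terms $\sum a_j(a_j-a_{j-1})$ telescope to boundary contributions of order $1$.
\item \emph{Lower bound.} Expand the denominator as $\sum_j a_j^2+O\bigl(\sum_j(\Delta a_j)^2\bigr)+O(a_0^2+a_d^2)$, writing $\Delta a_j=a_j-a_{j-1}$. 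Then invoke the discrete Neumann Poincar\'e inequality on a path with $d+1$ nodes, $\sum_j(\Delta a_j)^2\ge 4\sin^2(\pi/(2(d+1)))\sum_j(a_j-\bar a)^2$.
\end{itemize}

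I expect the lower bound to be the main obstacle. The constraint is not exactly $\bar a=0$, since it weights $a_d$ twice and omits $a_0$. There are also boundary terms $a_0^2$ and $a_d^2$. One has to show that all of these perturb the Rayleigh quotient only by a factor $1+o(1)$.

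My first attempt is a two-case argument:
\begin{itemize}
\item If $\sum_j(\Delta a_j)^2\ge \varepsilon\sum_j a_j^2$, the ratio is bounded below by a constant. That is far larger than $2\pi^2/d^2$, so these vectors are irrelevant.
\item Otherwise $a$ is slowly varying. Then $|a_0|,|a_d|\lesssim\|a\|/\sqrt d$, and the constraint forces $|\bar a|\lesssim\|a\|/d$. The Neumann inequality then applies up to a $1+O(\sqrt\varepsilon)$ factor.
\end{itemize}
Finally let $\varepsilon\to0$ slowly with $d$.

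If this gets messy, the fallback is exact diagonalization. The minimizer satisfies a second-order linear recurrence with constant coefficients in the interior. Solving it explicitly gives a characteristic equation of the form $\cos\theta=\dots$, whose smallest root satisfies $\theta\sim\pi/d$.
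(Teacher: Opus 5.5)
Your random-scan diagonalization and your reduction of the sweep are the paper's. Your $a_j$ is the path $s_j$ of Appendix~\ref{app:pyramid} with $\lambda=0$ (so $\beta=1$), your loss identity is \eqref{eq:pyramid-energy}, and $f_j=2a_j-a_{j-1}$ is \eqref{eq:pyramid-recover}, i.e.\ $f=a+Da$ with $(Da)_0=0$ and $(Da)_j=a_j-a_{j-1}$.

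The routes differ only in extracting the asymptotics of $\tau=1-\|T-\Pi\|^2$. The paper's device removes exactly the obstacle you anticipate. It does not push the mean-zero constraint onto $a$ and expand the denominator; it uses that $f$ itself is centered. With $n=d+1$ and Euclidean centering $C_n$, one has $f=C_nf=C_na+C_nDa$. The triangle inequality and the Neumann Poincar\'e inequality applied to $C_na$, which needs no constraint on $a$, give $\|f\|_2\le(\nu_n^{-1/2}+1)E(a)^{1/2}$, hence $\tau\ge2\nu_n/(1+\sqrt{\nu_n})^2$. For the reverse bound, take an exact centered first Neumann eigenvector $s$, the input $f=C_n(s+Ds)$ and the path $s-c\mathbf 1$ for the appropriate constant $c$. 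The reverse triangle inequality then gives $\tau\le2\nu_n/(1-\sqrt{\nu_n})^2$. This yields explicit two-sided bounds with relative error $O(1/d)$, with no case split and no boundary terms.

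Your own route, a shifted cosine test vector for the upper bound and expansion plus Poincar\'e for the lower bound, also works, but one step is wrong as written. In your second case, $E(a)<\varepsilon\|a\|_2^2$ does not give $|a_0|,|a_d|\lesssim\|a\|_2/\sqrt d$. A linear ramp of width of order $\varepsilon^{-1/2}$, shifted to meet the constraint, has $a_0^2/\|a\|_2^2$ of order $\sqrt\varepsilon$. That exceeds $1/d$ throughout the regime $\varepsilon\gg d^{-2}$ that your first case needs.

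The correct tool is the discrete Sobolev bound $\max_j a_j^2\le\|a\|_2^2/(d+1)+2\|a\|_2E(a)^{1/2}$, obtained by telescoping $a_k^2-a_j^2$ and averaging over $j$. It delivers the $1+O(\sqrt\varepsilon)$ factor you state. Combined with two further facts, it gives the lower bound directly without splitting into cases:
\begin{itemize}
\item the exact identity $a_0^2+\sum_{j=1}^d(2a_j-a_{j-1})^2=\|a\|_2^2-a_0^2+a_d^2+2E(a)$;
\item $|\bar a|=|a_0-a_d|/(d+1)$, which follows from the constraint.
\end{itemize}
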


\begin{proof}
Let $a_j\in\R^{d+1}$ have entry one at index $0$, minus one at index $j$, and zero elsewhere. By~\eqref{eq:pyramid-local},
\begin{equation*}
P_j=I-\frac12a_ja_j^\top.
\end{equation*}
Consequently,
\begin{equation*}
2d(I-P_{\RSG})
=\sum_{j=1}^d a_ja_j^\top
=
\begin{pmatrix}
d&-1&\cdots&-1\\
-1&1&&0\\
\vdots&&\ddots&\\
-1&0&&1
\end{pmatrix}.
\end{equation*}

The constant vector is an eigenvector with eigenvalue zero. Every vector $f$ satisfying $f_0=0$ and $\sum_{j=1}^d f_j=0$ is an eigenvector with eigenvalue one. These vectors form a $(d-1)$-dimensional subspace. The remaining eigenvector $(d,-1,\ldots,-1)$ has eigenvalue $d+1$. Thus the eigenvalues of $2d(I-P_{\RSG})$ are
\begin{equation*}
0,\qquad
\underbrace{1,\ldots,1}_{d-1\text{ times}},
\qquad d+1.
\end{equation*}

Since $P_{\RSG}$ is an average of orthogonal projections, it is positive semidefinite. The largest eigenvalue of $P_{\RSG}$ on the mean-zero subspace is therefore $1-1/(2d)$. Therefore, $\eta_{\RSG}=1/({2d})$.

The deterministic-scan asymptotic is established in Appendix~\ref{app:pyramid}, and the cost ratio follows by substitution.
\end{proof}

Relabeling the coordinates conjugates one deterministic ordering to another by a norm-preserving map. Thus every ordering has the same gap in this example. Proposition~\ref{prop:pyramid} shows that the quadratic dependence on $d$ in~\eqref{eq:gibbs-cost-2} cannot be improved uniformly.

\section{Metropolis-within-Gibbs algorithms}\label{sec:mwg}

We next consider Metropolis-within-Gibbs algorithms, in which one or more Gibbs updates are replaced by conditional Metropolis--Hastings updates. Suppose the $j$th full conditional has density $\pi_j(\cdot\mid x_{-j})$. Given $x=(x_j,x_{-j})$, propose $y_j$ from a density $q_j(x_j,\cdot\mid x_{-j})$ and accept it with probability
\begin{equation}\label{eq:mh-acceptance}
\alpha_j(x_j,y_j\mid x_{-j})
=1\wedge
\frac{\pi_j(y_j\mid x_{-j})q_j(y_j,x_j\mid x_{-j})}
{\pi_j(x_j\mid x_{-j})q_j(x_j,y_j\mid x_{-j})}.
\end{equation}
The other coordinates remain fixed, and a rejected proposal leaves the state unchanged. The resulting kernel is reversible with respect to the full conditional \citep{hastings1970monte,JonesRobertsRosenthal2014}. Hence, whenever the resulting component-wise kernels satisfy Assumption~\ref{assm:gbc}, the general comparisons apply directly.

One convenient sufficient condition for Assumption~\ref{assm:gbc} is available for conditional independence proposals. If $q_j$ does not depend on the current value $x_j$ and, for some $\varepsilon\in(0,1]$ independent of the block and conditioning value,
\begin{equation}\label{eq:independence-domination}
q_j(y_j\mid x_{-j})
\geq\varepsilon\pi_j(y_j\mid x_{-j})
\end{equation}
for almost every $x_{-j}$ and $y_j$, then $\opnorm{K_j-P_j}\leq1-\varepsilon$.
Indeed, the domination condition implies $K_j\geq\varepsilon P_j$ \citep{MengersenTweedie1996}. For $\varepsilon<1$, write $K_j=\varepsilon P_j+(1-\varepsilon)R_j$ for a reversible conditional kernel $R_j$. Using $R_jP_j=P_jR_j=P_j$ gives $\opnorm{K_j-P_j}\leq1-\varepsilon$. If $\varepsilon=1$, then $K_j=P_j$. This sufficient condition is also discussed in the earlier component-wise framework \citep[Remark~2.5]{kwon2026solidarity}.

\subsection{A Gaussian example with sharp contraction dependence}

Use the Gaussian target in~\eqref{eq:gaussian-target}. Write $\mu_j=(x_{j-1}+x_{j+1})/2$ with $x_0=x_{d+1}=0$. Fix $\lambda\in(0,1)$ and use the conditional autoregressive proposal
\begin{equation}\label{eq:overrelaxed-update}
Y_j=\mu_j-\lambda(X_j-\mu_j)+\sqrt{1-\lambda^2}\,Z,
\qquad Z\sim N(0,1),
\end{equation}
where $Z$ is independent of the current state. This is a Gaussian overrelaxation update \citep[Section~2]{Neal1995}. Given the other coordinates and at stationarity, $(X_j-\mu_j,Y_j-\mu_j)$ is a bivariate normal vector with unit variances and correlation $-\lambda$. Its distribution is unchanged by exchanging the two entries. Hence the proposal satisfies detailed balance and its Metropolis--Hastings acceptance probability is one. For $\lambda>0$, the update is not a Gibbs draw because its conditional distribution depends on the current $X_j$.

\begin{proposition}\label{prop:gaussian-mh}
For~\eqref{eq:overrelaxed-update}, $\|K_j-P_j\|=\lambda$ for every block. For each fixed $\lambda\in(0,1)$,
\begin{equation}\label{eq:gaussian-mh-sharp}
\frac{\eta_{\RCW}^{-1}}{d\eta_{\DCW}^{-1}}
\longrightarrow\frac2{1-\lambda}
\qquad\text{as }d\to\infty.
\end{equation}
The ratio of $\eta_{\RCW}$ to the lower bound in~\eqref{eq:main-bound}, with $\lambda_0=\lambda$, tends to one.
\end{proposition}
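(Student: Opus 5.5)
The plan is to reduce everything to finite-dimensional matrix computations on linear functions, using the Gaussian--Hermite structure, and then run a smooth-mode asymptotic analysis parallel to the $\lambda=0$ computation behind Proposition~\ref{prop:gaussian} in Appendix~\ref{app:gaussian}.

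\textbf{Step 1: the local norm $\|K_j-P_j\|=\lambda$.} Condition on $x_{-j}$ and write $u=x_j-\mu_j\sim N(0,1)$. Then $K_j$ acts on functions of $u$ as the Mehler kernel with correlation $-\lambda$. It therefore multiplies the $k$th Hermite polynomial $H_k(u)$ by $(-\lambda)^k$. The range of $P_j$ is the $k=0$ part, so on its orthogonal complement the largest eigenvalue in absolute value is $\lambda$, attained at $k=1$, and this holds uniformly in $x_{-j}$. Since $K_j$ is self-adjoint, this gives $\|K_j-P_j\|=\lambda$.

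\textbf{Step 2: reduce both gaps to matrices on linear functions.} Rewrite the update as
\[
Y_j=x_j-(1+\lambda)(Qx)_j+\sqrt{1-\lambda^2}\,Z .
\]
Thus $K_j$ is the Gaussian autoregressive kernel with mean map $A_j=I-(1+\lambda)e_je_j^\top Q$. Consequently:
\begin{itemize}
\item $P_{\RCW}$ is a mixture with linear mean maps, and on linear functionals it acts through $A_{\RCW}=I-(1+\lambda)Q/d$;
\item $P_{\DCW}$ is again a Gaussian autoregression, with mean map $A_{\DCW}=A_d\cdots A_1$. This is exactly the SOR sweep for $Q$ with relaxation parameter $\omega=1+\lambda$.
\end{itemize}
I will invoke the standard fact (Amit; Roberts--Sahu) that for such Gaussian-linear kernels the space of polynomials of total degree $k$ is invariant. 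On the degree-$k$ Wick chaos the operator acts as a symmetric tensor power of the degree-one action, so its norm is at most the $k$th power of the degree-one norm. Hence $\|K-\Pi\|$ equals the norm of the adjoint action on linear functionals in the $\Sigma=Q^{-1}$ inner product, namely $\|Q^{-1/2}A^\top Q^{1/2}\|_2$ for the relevant mean map $A$. For the random scan I will check this operator-level statement directly: the random-scan kernel is self-adjoint, and on degree-$k$ chaos its eigenvalues are averages of products, which should still be dominated by the degree-one case.

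\textbf{Step 3: the random-scan gap.} $A_{\RCW}$ is symmetric with eigenvalues
\[
1-\frac{(1+\lambda)\bigl(1-\cos(k\pi/(d+1))\bigr)}{d},\qquad k\in[d].
\]
For large $d$ these all lie in $(-1,1)$, and the largest is attained at $k=1$. This gives
\[
\eta_{\RCW}=\frac{(1+\lambda)\bigl(1-\cos(\pi/(d+1))\bigr)}{d}\sim\frac{(1+\lambda)\pi^2}{2d^3}.
\]

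\textbf{Step 4: the deterministic-scan gap (main obstacle).} I need $\eta_{\DCW}\sim(1+\lambda)\pi^2/\{(1-\lambda)d^2\}$. Given Step~3, this yields the limit $2/(1-\lambda)$ in~\eqref{eq:gaussian-mh-sharp}. It also gives the final claim: with $\lambda_0=\lambda$, the lower bound in~\eqref{eq:main-bound} is $\sim(1-\lambda)\eta_{\DCW}/(2d)\sim\eta_{\RCW}$.

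The difficulty is that $A_{\DCW}$ is nonnormal, so its spectral radius is not the norm. I would work in the energy norm $\|x\|_Q^2=x^\top Qx$, which is the norm dual to the one appearing in Step~2. A single relaxed update at site $j$ lowers the energy by exactly $\omega(2-\omega)r_j^2=(1-\lambda^2)r_j^2$, where $r_j$ is the current residual $(Qx)_j$. So $1-\|A_{\DCW}\|_Q^2$ equals the infimum, over unit-energy $x$, of $(1-\lambda^2)\sum_j r_j^2$, with residuals evaluated sequentially along the sweep.

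\emph{Upper bound on the gap.} Take $x$ the lowest sine mode and track the sweep. Each residual is amplified by a boundary/propagation effect of factor roughly $1/(1-\lambda)$ relative to the random-scan residual. I expect the sequential residual sum to be $\sim 2\pi^2/\{(1-\lambda)^2d^2\}$ per unit energy. This would give
\[
1-\|A_{\DCW}\|^2\sim\frac{2(1+\lambda)\pi^2}{(1-\lambda)d^2},
\]
hence the claimed $\eta_{\DCW}$.

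\emph{Matching lower bound on the gap.} Here I would mimic the $\lambda=0$ argument of Appendix~\ref{app:gaussian}. The plan is to write the sweep's residual vector as a lower-triangular Toeplitz-like transform of $Qx$, diagonalize the near-Toeplitz pieces asymptotically, and show that the extremal quotient is attained by smooth modes up to $1+o(1)$.

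Getting this two-sided asymptotic with the exact constant $1/(1-\lambda)$ is the step I expect to be hardest. I would first try to verify it via an exact Fourier/Chebyshev computation of the SOR iteration for the tridiagonal $Q$, using its known eigenstructure with $\omega=1+\lambda$.
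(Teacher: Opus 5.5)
Steps~1 and~2 for the sweep, and your energy identity, are correct. Step~4, however, leaves the decisive inequality unproved.

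\textbf{The missing bound on $\eta_{\DCW}$.} Once Theorem~\ref{thm:comparison} and a linear test function are used (see below), the whole statement reduces to one inequality. Let $\epsilon_d=1-\cos(\pi/(d+1))$ be the smallest eigenvalue of $Q$. You need the lower bound $\eta_{\DCW}\geq(1-o(1))\,2(1+\lambda)\epsilon_d/(1-\lambda)$. Equivalently, you need an upper bound on $\|A_{\DCW}\|_Q$ that holds for every coefficient vector, not just a test mode.

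The direction your sine-mode computation targets is in fact free. The linear eigenfunction with $Qa=\epsilon_da$ gives $\eta_{\RCW}\leq(1+\lambda)\epsilon_d/d$. Theorem~\ref{thm:comparison} then forces $\eta_{\DCW}/(2-\eta_{\DCW})\leq(1+\lambda)\epsilon_d/(1-\lambda)$.

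Your proposed fallback through the SOR eigenstructure cannot supply the hard direction. $A_{\DCW}$ is nonnormal, so its eigenvalues only give $\rho(A_{\DCW})\leq\|A_{\DCW}\|_Q$, which is again the easy inequality. Turning eigenvalues into a norm bound would need uniform-in-$d$ control of eigenvector conditioning.

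The missing idea is already in your energy identity. Write $Q=I+L+L^\top$ with $L$ strictly lower triangular, and set $M=(1+\lambda)^{-1}I+L$. The vector of sequential residuals is $(1+\lambda)^{-1}M^{-1}Qa$, so the substitution $z=M^{-1}Qa$ gives
\[
1-\|A_{\DCW}\|_Q^2=\frac{1-\lambda}{1+\lambda}\inf_{a\neq0}\frac{\|M^{-1}Qa\|_2^2}{a^\top Qa}=\frac{1-\lambda}{1+\lambda}\,\frac{1}{\|Q^{-1/2}M\|_2^2}.
\]
No Toeplitz asymptotics are needed. With $S$ the lower shift, $M=\frac{1-\lambda}{2(1+\lambda)}I+\frac12(I-S)$. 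The identity $(I-S)(I-S)^\top=2Q-e_1e_1^\top\preceq2Q$ gives $\|Q^{-1/2}(I-S)\|_2\leq\sqrt2$. The triangle inequality then yields $\|Q^{-1/2}M\|_2=\frac{1-\lambda}{2(1+\lambda)\sqrt{\epsilon_d}}+O(1)$. Hence $1-\|A_{\DCW}\|_Q^2\sim4(1+\lambda)\epsilon_d/(1-\lambda)$, with both bounds at once. Finally, $\eta_{\DCW}=(1-\|A_{\DCW}\|_Q^2)/(1+\|A_{\DCW}\|_Q)$ finishes the asymptotic.

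\textbf{A smaller gap: the exact value of $\eta_{\RCW}$ in Step~3.} The chaos reduction in Step~2 is valid for a single stationary Gaussian transition such as $P_{\DCW}$. But $P_{\RCW}$ is a mixture of Gaussian transitions and is not itself one. On the $k$th chaos it acts as $d^{-1}\sum_jT_j^{\otimes k}$ with noncommuting $T_j$. Its eigenvalues are therefore not averages of products, and ``should still be dominated'' is not an argument. Simple termwise bounds such as $I-K_j\succeq(1-\lambda)(I-P_j)$ lose a $\lambda$-dependent factor.

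You do not need this step. Only the upper bound $\eta_{\RCW}\leq(1+\lambda)\epsilon_d/d$ from the linear eigenfunction is required, because Theorem~\ref{thm:comparison} supplies the rest:
\[
\frac{\eta_{\DCW}}{(1+\lambda)\epsilon_d}\leq\frac{\eta_{\DCW}}{d\,\eta_{\RCW}}\leq\frac{2-\eta_{\DCW}}{1-\lambda}.
\]
Once the asymptotic for $\eta_{\DCW}$ is established, both outer terms tend to $2/(1-\lambda)$. The same sandwich gives the final claim that $\eta_{\RCW}$ is asymptotic to the lower bound in~\eqref{eq:main-bound}.
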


Appendix~\ref{app:gaussian} proves this result by computing the deterministic-scan norm and evaluating random scan on a linear function. Since the local contraction parameter remains fixed as $d$ increases, both the factor two and the dependence on $1-\lambda_0$ are necessary in~\eqref{eq:intro-cost}. The same conclusion holds along sequences $\lambda=\lambda_d$ for which $d(1-\lambda_d)\to\infty$, as shown in the appendix.

\subsection{A Metropolis pyramid example}

Consider the uniform target on~\eqref{eq:pyramid-target}. Fix $\lambda\in(0,1)$. When updating coordinate $j$, propose to flip its bit with probability $(1+\lambda)/2$ and otherwise keep its current value. Accept a flip if the proposed state belongs to $\X_d$, and reject it otherwise. The proposal is symmetric. On the conditional pair $\{0,e_j\}$, the target is uniform and every flip is accepted. On every other conditional fiber, the target is degenerate and a proposed flip is rejected. The resulting conditional Metropolis--Hastings kernel is
\begin{equation}\label{eq:metropolis-pyramid-local}
K_j=(1+\lambda)P_j-\lambda I.
\end{equation}
Its nonconstant eigenvalue on the pair $\{0,e_j\}$ is $-\lambda$.

\begin{proposition}\label{prop:metropolis-pyramid}
For~\eqref{eq:metropolis-pyramid-local} and every $d\geq2$,
\begin{equation}\label{eq:metropolis-pyramid-exact}
\|K_j-P_j\|=\lambda,
\qquad
\eta_{\RCW}=\frac{1+\lambda}{2d}.
\end{equation}
Uniformly over $\lambda\in(0,1)$ as $d\to\infty$,
\begin{equation}\label{eq:metropolis-pyramid-gaps}
\eta_{\DCW}\sim
\frac{1-\lambda}{1+\lambda}\frac{\pi^2}{d^2},
\qquad
\frac{d\eta_{\DCW}^{-1}}{\eta_{\RCW}^{-1}}
\sim
\frac{(1+\lambda)^2}{2\pi^2(1-\lambda)}d^2.
\end{equation}
\end{proposition}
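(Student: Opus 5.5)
The proof splits into two parts. The exact statements in \eqref{eq:metropolis-pyramid-exact} follow from the rank-one structure already used in the proof of Proposition~\ref{prop:pyramid}. The deterministic-scan asymptotic needs an explicit analysis of the sweep matrix, modelled on the Gibbs computation in Appendix~\ref{app:pyramid}.

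\emph{Exact part.} With $a_j$ as in the proof of Proposition~\ref{prop:pyramid}, we have $P_j=I-\tfrac12a_ja_j^\top$. Setting $c=(1+\lambda)/2$, equation \eqref{eq:metropolis-pyramid-local} becomes
\begin{equation*}
K_j=I-c\,a_ja_j^\top,
\qquad
K_j-P_j=-\lambda\cdot\tfrac12a_ja_j^\top .
\end{equation*}
Since $\tfrac12a_ja_j^\top$ is an orthogonal projection (note $\|a_j\|^2=2$), this gives $\|K_j-P_j\|=\lambda$. Next,
\begin{equation*}
P_{\RCW}=I-\frac{c}{d}\sum_j a_ja_j^\top .
\end{equation*}
The eigen-decomposition of $\sum_j a_ja_j^\top$ found in Proposition~\ref{prop:pyramid} gives the mean-zero eigenvalues of $P_{\RCW}$:
\begin{equation*}
1-\frac{1+\lambda}{2d}\ \ (\text{multiplicity } d-1),
\qquad
1-\frac{(1+\lambda)(d+1)}{2d}\ \ (\text{simple}).
\end{equation*}
Because $P_{\RCW}$ is self-adjoint, its norm is the larger absolute value. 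The second eigenvalue is dominated in absolute value exactly when $(1+\lambda)(d+2)\le4d$, which holds for $d\ge2$ and $\lambda<1$. Hence $\eta_{\RCW}=(1+\lambda)/(2d)$.

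\emph{Deterministic scan.} Write $M=K_d\cdots K_1$ in coordinates. Applying $K_j$ replaces $(f_0,f_j)$ by
\begin{equation*}
\bigl((1-c)f_0+cf_j,\;cf_0+(1-c)f_j\bigr),
\end{equation*}
and leaves the other entries fixed. So a sweep is the recursion
\begin{equation*}
g_0=f_0,\qquad
g_j=(1-c)g_{j-1}+cf_j,\qquad
(Mf)_j=cg_{j-1}+(1-c)f_j,\qquad
(Mf)_0=g_d .
\end{equation*}
Here $1-c=(1-\lambda)/2$. Thus $M$ is an explicit structured matrix with geometric weights $(1-c)^k$. I would compute $\|M-\Pi\|^2$ as the top eigenvalue of $M^\top M$ restricted to $L^2_0(\Pi)$, equivalently the bottom eigenvalue $\sigma_d$ of $I-M^\top M$ there. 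Following the Gibbs case, I would not work with $M^\top M$ entrywise. Instead I would use the energy identity
\begin{equation*}
\|f\|^2-\|Mf\|^2=\sum_j\bigl(\|h_{j-1}\|^2-\|K_jh_{j-1}\|^2\bigr),
\qquad h_j=K_j\cdots K_1f,
\end{equation*}
with each term equal to $(1-\lambda^2)/2\cdot\langle a_j,h_{j-1}\rangle^2$ because $K_j^2=I-\tfrac{1-\lambda^2}{2}a_ja_j^\top$. This expresses $\sigma_d$ as a Rayleigh quotient of a quadratic form in the increments $g_{j-1}-f_j$. The recursion makes that form a discrete Dirichlet-type energy with a boundary condition. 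The boundary condition comes from the mean-zero constraint and the wrap-around value $(Mf)_0=g_d$.

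The target is
\begin{equation*}
\sigma_d\sim 2\,\frac{1-\lambda}{1+\lambda}\,\frac{\pi^2}{d^2}.
\end{equation*}
Then $\|M-\Pi\|=\sqrt{1-\sigma_d}=1-\sigma_d/2+O(\sigma_d^2)$, which gives the first relation in \eqref{eq:metropolis-pyramid-gaps}. The cost ratio follows by substituting $\eta_{\RCW}$. Heuristically, low-frequency test functions of the form $f_j\approx\sin(\pi j/d)$, with $f_0$ tuned, should see an effective diffusion constant $(1-c)/c=(1-\lambda)/(1+\lambda)$ times the Gibbs one. I would prove the upper bound on $\sigma_d$ with these trial vectors. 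For the matching lower bound I would diagonalize the reduced form exactly, via Chebyshev or trigonometric eigenvectors as for $Q_d$ and the Gibbs pyramid.

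\emph{Main obstacle.} The hardest step is making the asymptotic uniform in $\lambda\in(0,1)$. As $\lambda\to1$ the damping $1-c\to0$, so the geometric weights no longer localize. The lower bound must therefore track the exact factor $(1-\lambda)/(1+\lambda)$ rather than treat it as a constant. I would first aim for the exact characteristic equation of the reduced tridiagonal (or Toeplitz-plus-boundary) matrix in closed form, with $\lambda$ as a parameter. Then I would show its smallest root is
\begin{equation*}
\frac{1-\lambda}{1+\lambda}\cdot\frac{2\pi^2}{d^2}\,\bigl(1+o(1)\bigr),
\end{equation*}
with the error $o(1)$ bounded independently of $\lambda$.
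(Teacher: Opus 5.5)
Your treatment of \eqref{eq:metropolis-pyramid-exact} is correct and follows the paper: the rank-one form of $K_j-P_j$, the identity $P_{\RCW}=(1+\lambda)P_{\RSG}-\lambda I$, and the check $(1+\lambda)(d+2)\le4d$.

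The deterministic-scan asymptotic, however, is not proved. You set up the right recursion and the right per-step loss, but you defer both bounds to a trial-vector computation and an exact diagonalization that you do not carry out. You also miss the idea that makes both unnecessary. Your $g$ is a path on $n=d+1$ nodes, and the recursion inverts to $f=g+\beta Dg$, where $\beta=(1-\lambda)/(1+\lambda)$, $(Dg)_0=0$ and $(Dg)_j=g_j-g_{j-1}$. Since $g_j-g_{j-1}=c(f_j-g_{j-1})$ and $(1-\lambda^2)/(2c^2)=2\beta$, your loss sum is exactly $\|f\|_2^2-\|Mf\|_2^2=2\beta E(g)$, with $E(g)=\|Dg\|_2^2=\sum_{j=1}^d(g_j-g_{j-1})^2$. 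This $E$ is the free (Neumann) path Laplacian, so $\|C_ng\|_2\le\nu_n^{-1/2}E(g)^{1/2}$, where $\nu_n=2-2\cos(\pi/n)$ and $C_n$ is centering. For centered $f$, the triangle inequality gives $\|f\|_2\le(\nu_n^{-1/2}+\beta)E(g)^{1/2}$, hence $\sigma_d\ge2\beta\nu_n(1+\beta\sqrt{\nu_n})^{-2}$. Taking $g$ to be the first nonconstant Laplacian eigenvector and centering the resulting input gives the reverse bound $\sigma_d\le2\beta\nu_n(1-\beta\sqrt{\nu_n})^{-2}$. Uniformity in $\lambda$ is then free because $\beta\le1$, so it is not the obstacle you describe. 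Your diagnosis is also backwards: as $\lambda\to1$ the weights $(1-c)^k$ localize more strongly, not less. The only real issue, relative accuracy as $\beta\to0$, is absorbed by the factor $(1\pm\beta\sqrt{\nu_n})^{-2}$.

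Two steps of your plan would fail as written.

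\emph{Trial vectors.} Your choice $f_j\approx\sin(\pi j/d)$ is the wrong mode. There is no boundary condition: mean zero is a global orthogonality constraint, and $(Mf)_0=g_d$ is already accounted for by the loss identity. The extremal profile is therefore the Neumann mode $\cos\{\pi(i+\tfrac12)/n\}$. A centered half-sine has $E(g)/\|C_ng\|_2^2\approx(\pi^2/2)/(1/2-4/\pi^2)\,n^{-2}\approx5.3\,\pi^2/n^2$, so your upper bound on $\sigma_d$ would be off by a factor of about five. Tuning the single entry $f_0$ cannot repair this, since a large $f_0$ makes $(g_1-g_0)^2$ comparable to $\|f\|_2^2$.

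\emph{Lower bound.} The reduced problem is not a tridiagonal Toeplitz eigenproblem. It is the pencil $\bigl(L_n,(I+\beta D)^\top(I+\beta D)\bigr)$ restricted to the hyperplane $\mathbf 1^\top(I+\beta D)g=0$. You give no argument that its smallest eigenvalue has a closed form, let alone one with a $\lambda$-uniform error.

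Replace both steps by the Poincar\'e-plus-triangle-inequality sandwich above.
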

\begin{proof}
Since $K_j-P_j=-\lambda(I-P_j)$, the local norm equals $\lambda$. Moreover,
\begin{equation*}
P_{\RCW}=(1+\lambda)P_{\RSG}-\lambda I.
\end{equation*}
The spectrum computed in the proof of Proposition~\ref{prop:pyramid} shows that the nonconstant eigenvalues of $P_{\RCW}$ are
\begin{equation*}
1-\frac{1+\lambda}{2d}
\quad\text{with multiplicity }d-1,
\qquad
1-\frac{(1+\lambda)(d+1)}{2d}.
\end{equation*}
The first eigenvalue is nonnegative and exceeds the second. Moreover, $(1+\lambda)(d+2)\leq4d$ for $d\geq2$, so it is also at least the negative of the second. It therefore dominates the absolute value of the second. Thus the norm on the mean-zero subspace is $1-(1+\lambda)/(2d)$, which proves~\eqref{eq:metropolis-pyramid-exact}. Appendix~\ref{app:pyramid} proves the deterministic-scan asymptotic uniformly in $\lambda$. The cost ratio follows by substitution. Relabeling the leaves conjugates any deterministic ordering to the one used there, so the deterministic-scan gap is the same for every ordering.
\end{proof}

Proposition~\ref{prop:gaussian-mh} establishes the sharp dependence on $\lambda_0$ for the cost ratio of random scan to deterministic scan. Proposition~\ref{prop:metropolis-pyramid} establishes the joint orders $d^2$ and $(1-\lambda_0)^{-1}$ in~\eqref{eq:deterministic-cost}. 

\section{General component-wise algorithms}\label{sec:general}

We now allow each local update to be any reversible kernel targeting its full conditional. The proof uses self-adjointness, the identities in~\eqref{eq:anchor}, and Assumption~\ref{assm:gbc}.

\subsection{From deterministic scan to random scan}

For self-adjoint operators $A$ and $B$, write $A\succeq B$ when $\ip{f}{(A-B)f}\geq0$ for every $f\in L^2(\Pi)$. We give the short argument to make the constants explicit.

\begin{lemma}\label{lem:dissipation}
Under Assumption~\ref{assm:gbc}, for every $j\in[d]$,
\begin{equation}\label{eq:dissipation}
K_j\succeq-\lambda_0I,
\qquad I-K_j^2\succeq(1-\lambda_0)(I-K_j)\succeq0,
\end{equation}
and
\begin{equation}\label{eq:forward-operator-bounds}
(I-K_j)^2\preceq
\frac{1+\lambda_0}{1-\lambda_0}(I-K_j^2),
\qquad
I-K_j\preceq(1+\lambda_0)I.
\end{equation}
\end{lemma}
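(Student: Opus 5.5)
The plan is to decompose $L^2(\Pi)$ orthogonally as $\mathrm{ran}(P_j)\oplus\mathrm{ran}(I-P_j)$ and use \eqref{eq:anchor} to see that $K_j$ acts block-diagonally. Since $K_jP_j=P_jK_j=P_j$, we have $K_j=P_j+K_j(I-P_j)$. The operator $M_j:=K_j-P_j=(I-P_j)K_j(I-P_j)$ is self-adjoint, vanishes on $\mathrm{ran}(P_j)$, maps $\mathrm{ran}(I-P_j)$ into itself, and has norm at most $\lambda_0$ by Assumption~\ref{assm:gbc}. Thus $K_j=P_j\oplus M_j$, and every operator inequality in the lemma is a polynomial inequality in $K_j$. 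On $\mathrm{ran}(P_j)$, $K_j$ acts as the identity, so each inequality there reduces to a scalar check at the value $1$. On $\mathrm{ran}(I-P_j)$, the restriction of $K_j$ is the self-adjoint operator $M_j$ with spectrum in $[-\lambda_0,\lambda_0]$. By the spectral theorem (functional calculus) for $M_j$, it suffices to check scalar inequalities for $t\in[-\lambda_0,\lambda_0]$.

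The scalar checks are as follows.
\begin{itemize}
\item[(i)] $t\geq-\lambda_0$ holds on $[-\lambda_0,\lambda_0]$, and also at $t=1$. This gives $K_j\succeq-\lambda_0 I$.
\item[(ii)] $1-t^2=(1-t)(1+t)\geq(1-\lambda_0)(1-t)$, because $1+t\geq1-\lambda_0$ and $1-t\geq0$. The value $t=1$ gives $0\geq0$. The final $\succeq0$ holds because $1-t\geq0$ on the spectrum.
\item[(iii)] $(1-t)^2\leq\frac{1+\lambda_0}{1-\lambda_0}(1-t)(1+t)$. This is equivalent, after dividing by $1-t\ge0$, to $(1-\lambda_0)(1-t)\leq(1+\lambda_0)(1+t)$, that is, $-2\lambda_0\leq 2t$, which holds for $t\ge -\lambda_0$. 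At $t=1$ both sides vanish.
\item[(iv)] $1-t\leq1+\lambda_0$, which is again $t\geq-\lambda_0$. At $t=1$ it reads $0\le 1+\lambda_0$.
\end{itemize}

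To avoid invoking functional calculus for a block decomposition, one can argue directly with quadratic forms. Write $f=g+h$ with $g=P_jf$ and $h=(I-P_j)f$. Then
\[
\langle f,K_jf\rangle_\Pi=\|g\|_\Pi^2+\langle h,M_jh\rangle_\Pi,\qquad
\langle f,K_j^2f\rangle_\Pi=\|g\|_\Pi^2+\|M_jh\|_\Pi^2 .
\]
Every inequality then reduces to the corresponding one for $M_j$ on the invariant subspace $\mathrm{ran}(I-P_j)$, and for $M_j$ we apply the spectral theorem with $\sigma(M_j)\subseteq[-\lambda_0,\lambda_0]$.

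The main obstacle is only to justify the block decomposition cleanly, namely that $K_j$ commutes with $P_j$ and that $\|M_j\|\le\lambda_0$ gives the spectral inclusion. Both follow immediately from \eqref{eq:anchor}, the self-adjointness of $K_j$, and Assumption~\ref{assm:gbc}. After that, the remaining steps are the elementary scalar inequalities above.
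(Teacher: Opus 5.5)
Your proposal is correct and follows essentially the same route as the paper: split $f=P_jf+(I-P_j)f$ using \eqref{eq:anchor}, note that $K_j$ is the identity on the range of $P_j$ and has spectrum in $[-\lambda_0,\lambda_0]$ on the complement, then apply the spectral theorem to scalar inequalities for $t\in\{1\}\cup[-\lambda_0,\lambda_0]$. Your scalar checks, including the reduction of the third inequality to $t\geq-\lambda_0$, match the paper's, and your quadratic-form variant is a valid way of presenting the same argument.
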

\begin{proof}
Write $f=P_jf+(I-P_j)f$. The two summands are orthogonal. Identity~\eqref{eq:anchor} shows that $K_j$ fixes the first and maps the second to a function with conditional mean zero. If $P_jf=0$, Assumption~\ref{assm:gbc} gives $\|K_jf\|_\Pi\leq\lambda_0\|f\|_\Pi$. Since $K_j$ is self-adjoint, its spectral values on such functions belong to $[-\lambda_0,\lambda_0]$. On functions fixed by $P_j$, it acts as the identity. For $t\in\{1\}\cup[-\lambda_0,\lambda_0]$,
\begin{equation*}
t\geq-\lambda_0,
\qquad
1-t^2-(1-\lambda_0)(1-t)=(1-t)(t+\lambda_0)\geq0,
\end{equation*}
and
\begin{equation*}
(1-t)^2\leq
\frac{1+\lambda_0}{1-\lambda_0}(1-t^2),
\qquad
0\leq1-t\leq1+\lambda_0.
\end{equation*}
The spectral theorem gives~\eqref{eq:dissipation} and~\eqref{eq:forward-operator-bounds}.
\end{proof}

The first inequality in~\eqref{eq:forward-operator-bounds} is the quadratic-form version of an earlier local spectral inequality \citep[Lemma~B.4]{kwon2026solidarity}. The remaining inequalities follow from the same spectral decomposition used in that work \citep[Section~3.1, Lemma~3.2 and Remark~B.2]{kwon2026solidarity}.

\begin{proof}[Proof of Theorem~\ref{thm:comparison}]
We use the sweep trajectory and telescoping squared norm losses from the earlier component-wise argument \citep[Lemma~3.1 and proof of Theorem~1.1]{kwon2026solidarity}. Here~\eqref{eq:dissipation} extends that step to the general reversible updates.

Fix $f\in L^2_0(\Pi)$ with $\pnorm{f}=1$, and define
\begin{equation*}
u_0=f,\qquad u_j=K_ju_{j-1},\quad j\in[d].
\end{equation*}
Then $u_d=P_{\DCW}f$. Set $Q=1-\pnorm{u_d}^2$. Then $\opnorm{P_{\DCW}-\Pi}\leq1-\eta_{\DCW}$ gives
\begin{equation}\label{eq:sweep-loss-lower}
Q\geq1-(1-\eta_{\DCW})^2=\eta_{\DCW}(2-\eta_{\DCW})>0.
\end{equation}
Moreover,
\begin{equation}\label{eq:telescoping}
\sum_{j=1}^d\bigl(\pnorm{u_{j-1}}^2-\pnorm{u_j}^2\bigr)=Q.
\end{equation}

Since $I-K_j$ is positive semidefinite, Cauchy--Schwarz for its quadratic form and $I-K_j^2\succeq(1-\lambda_0)(I-K_j)$ in Lemma~\ref{lem:dissipation} yield
\begin{equation}\label{eq:step-cs}
\begin{aligned}
\bigl|\ip{f}{u_{j-1}-u_j}\bigr|
&=\bigl|\ip{f}{(I-K_j)u_{j-1}}\bigr|\\
&\leq\ip{f}{(I-K_j)f}^{1/2}\ip{u_{j-1}}{(I-K_j)u_{j-1}}^{1/2}\\
&\leq\frac{\ip{f}{(I-K_j)f}^{1/2}}{\sqrt{1-\lambda_0}}
\bigl(\pnorm{u_{j-1}}^2-\pnorm{u_j}^2\bigr)^{1/2}.
\end{aligned}
\end{equation}
Since $u_0=f$ and $u_d=P_{\DCW}f$, summing and applying Cauchy--Schwarz gives
\begin{equation}\label{eq:sweep-cs}
\begin{aligned}
1-\ip{f}{P_{\DCW}f}
&=\sum_{j=1}^d\ip{f}{u_{j-1}-u_j}\\
&\leq\sum_{j=1}^d\bigl|\ip{f}{u_{j-1}-u_j}\bigr|\\
&\leq\left\{
\frac{1}{1-\lambda_0}
\sum_{j=1}^d\ip{f}{(I-K_j)f}
\sum_{j=1}^d\bigl(\pnorm{u_{j-1}}^2-\pnorm{u_j}^2\bigr)
\right\}^{1/2}\\
&=\left\{
\frac{d}{1-\lambda_0}\ip{f}{(I-P_{\RCW})f}Q
\right\}^{1/2}.
\end{aligned}
\end{equation}
On the other hand,
\begin{equation*}
\ip{f}{P_{\DCW}f}\leq\pnorm{P_{\DCW}f}=\sqrt{1-Q}.
\end{equation*}
The left side of~\eqref{eq:sweep-cs} is therefore positive. Squaring both sides and dividing by $Q$ gives
\begin{equation}\label{eq:form-bound}
\begin{aligned}
\ip{f}{(I-P_{\RCW})f}
&\geq\frac{1-\lambda_0}{d}\frac{(1-\sqrt{1-Q})^2}{Q}\\
&=\frac{1-\lambda_0}{d}\frac{1-\sqrt{1-Q}}{1+\sqrt{1-Q}}\\
&\geq\frac{1-\lambda_0}{d}\frac{\eta_{\DCW}}{2-\eta_{\DCW}}.
\end{aligned}
\end{equation}
The last step follows from~\eqref{eq:sweep-loss-lower} and the fact that $(1-\sqrt{1-x})/(1+\sqrt{1-x})$ is increasing on $[0,1]$.

It remains to control the lower end of the random-scan spectrum. Set
\begin{equation*}
c=\frac{1-\lambda_0}{d}\frac{\eta_{\DCW}}{2-\eta_{\DCW}}.
\end{equation*}
Lemma~\ref{lem:dissipation} gives $P_{\RCW}\succeq-\lambda_0I$, whereas \eqref{eq:form-bound} gives $P_{\RCW}\preceq(1-c)I$ on $L^2_0(\Pi)$. Thus the spectrum relevant to~\eqref{eq:centered-norm} belongs to $[-\lambda_0,1-c]$. Since $0<c\leq(1-\lambda_0)/d\leq1-\lambda_0$, we have $\lambda_0\leq1-c$. Self-adjointness now implies
\begin{equation*}
\opnorm{P_{\RCW}-\Pi}\leq\max\{\lambda_0,1-c\}=1-c.
\end{equation*}
This proves the first inequality in~\eqref{eq:main-bound}. The second follows from $2-\eta_{\DCW}\leq2$.
\end{proof}

\subsection{From random scan to deterministic scan}

\begin{proof}[Proof of Theorem~\ref{thm:forward-comparison}]
This proof refines the earlier random-to-deterministic comparison argument \citep[Theorem~1.1]{kwon2026solidarity}.
Put $\varepsilon=1-\lambda_0$ and $s=1+\lambda_0$. Fix $f\in L^2_0(\Pi)$ with $\pnorm{f}=1$, and define
\begin{equation*}
u_0=f,
\qquad
u_j=K_ju_{j-1},
\qquad
q_j=\pnorm{u_{j-1}}^2-\pnorm{u_j}^2.
\end{equation*}
Since
\begin{equation*}
q_j=\ip{u_{j-1}}{(I-K_j^2)u_{j-1}}\geq0,
\end{equation*}
the losses telescope to
\begin{equation}\label{eq:forward-loss}
Q:=\sum_{j=1}^dq_j
=1-\pnorm{P_{\DCW}f}^2.
\end{equation}
By~\eqref{eq:forward-operator-bounds},
\begin{equation}\label{eq:forward-increment}
\pnorm{(I-K_j)u_{j-1}}^2
\leq\frac{s}{\varepsilon}q_j.
\end{equation}
For each $j$, Cauchy--Schwarz and~\eqref{eq:forward-increment} give
\begin{equation}\label{eq:forward-displacement}
\begin{aligned}
\pnorm{f-u_{j-1}}^2
&=\pnorm{\sum_{i<j}(I-K_i)u_{i-1}}^2\\
&\leq(j-1)\sum_{i<j}\pnorm{(I-K_i)u_{i-1}}^2\\
&\leq\frac{s(j-1)}{\varepsilon}Q.
\end{aligned}
\end{equation}
The right side is zero when $j=1$.

For a positive semidefinite operator $A$, Cauchy--Schwarz for its quadratic form gives
\begin{equation*}
\ip{v+w}{A(v+w)}
\leq2\ip{v}{Av}+2\ip{w}{Aw}.
\end{equation*}
Apply this with $A=I-K_j$, $v=u_{j-1}$, and $w=f-u_{j-1}$. Equations~\eqref{eq:dissipation}, \eqref{eq:forward-operator-bounds}, and~\eqref{eq:forward-displacement} yield
\begin{equation}\label{eq:forward-form-step}
\ip{f}{(I-K_j)f}
\leq\frac{2}{\varepsilon}q_j
+\frac{2s^2(j-1)}{\varepsilon}Q.
\end{equation}
Averaging over $j$ and using $\sum_{j=1}^d(j-1)=d(d-1)/2$ gives
\begin{equation}\label{eq:forward-form-comparison}
\ip{f}{(I-P_{\RCW})f}
\leq\frac1{\varepsilon}
\left\{\frac2d+s^2(d-1)\right\}Q.
\end{equation}

The operator $P_{\RCW}$ is self-adjoint, but it need not be positive semidefinite. Nevertheless, since $f$ is centered,
\begin{equation*}
\ip{f}{P_{\RCW}f}
=\ip{f}{(P_{\RCW}-\Pi)f}
\leq\opnorm{P_{\RCW}-\Pi},
\end{equation*}
and hence
\begin{equation*}
\eta_{\RCW}\leq\ip{f}{(I-P_{\RCW})f}.
\end{equation*}
Combining this inequality with~\eqref{eq:forward-form-comparison} and taking the infimum over centered unit vectors gives
\begin{equation}\label{eq:forward-squared-loss}
\begin{aligned}
1-\opnorm{P_{\DCW}-\Pi}^2
&=\inf_{\substack{\Pi(f)=0\\\pnorm{f}=1}}
\{1-\pnorm{P_{\DCW}f}^2\}\\
&\geq
\frac{\varepsilon}{2/d+s^2(d-1)}\eta_{\RCW}.
\end{aligned}
\end{equation}
The left side of~\eqref{eq:forward-squared-loss} is the squared-norm loss
\begin{equation*}
1-(1-\eta_{\DCW})^2
=\eta_{\DCW}(2-\eta_{\DCW}),
\end{equation*}
not the spectral gap itself. Since this quantity is at most $2\eta_{\DCW}$, the  first inequality in Theorem~\ref{thm:forward-comparison} follows. The second follows from $s\leq2$ and
\begin{equation*}
2\{2/d+s^2(d-1)\}\leq8d,
\qquad d\geq2.
\end{equation*}
Rearranging the first inequality gives the refined cost bound in~\eqref{eq:deterministic-cost}. Its final inequality follows from $s\leq2$. Relabeling the blocks proves the result for any fixed deterministic ordering.
\end{proof}

\subsection{Positive semidefinite local updates}

The dependence on $\lambda_0$ can be removed when every $K_j$ is positive semidefinite. In this case the spectral theorem gives
\begin{equation*}
K_j\succeq0,
\qquad I-K_j^2\succeq I-K_j.
\end{equation*}
These are precisely the two inequalities used in the proof of Theorem~\ref{thm:comparison} with coefficient $1-\lambda_0$ replaced by one. This gives the following corollary, without requiring Assumption~\ref{assm:gbc}.

\begin{corollary}\label{cor:positive}
Suppose each $K_j$ is a reversible positive semidefinite component-wise Markov operator. For any fixed deterministic ordering with $\eta_{\DCW}>0$,
\begin{equation}\label{eq:positive-bound}
\eta_{\RCW}\geq\frac{\eta_{\DCW}}{d(2-\eta_{\DCW})},
\qquad
\frac{\eta_{\RCW}^{-1}}{d\eta_{\DCW}^{-1}}\leq2.
\end{equation}
\end{corollary}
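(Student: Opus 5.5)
We rerun the proof of Theorem~\ref{thm:comparison} and use positive semidefiniteness of $K_j$ wherever that proof used Assumption~\ref{assm:gbc}. The constant $1-\lambda_0$ is then replaced by $1$, and the lower end of the random-scan spectrum is controlled by $0$ instead of $-\lambda_0$.

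\textbf{Step 1: operator inequalities.} Since $K_j$ is a self-adjoint Markov operator, its spectrum lies in $[-1,1]$. Positive semidefiniteness restricts it further to $[0,1]$. For $t\in[0,1]$ we have $1-t\geq0$ and
\begin{equation*}
1-t^2-(1-t)=t(1-t)\geq0.
\end{equation*}
The spectral theorem therefore gives
\begin{equation*}
0\preceq I-K_j,\qquad I-K_j^2\succeq I-K_j,\qquad K_j\succeq0.
\end{equation*}
These are exactly the conclusions of~\eqref{eq:dissipation} with $\lambda_0=0$, and no reference to $P_j$ or the assumption is needed.

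\textbf{Step 2: the quadratic-form bound.} Fix a centered unit $f$ and define the sweep trajectory $u_j$, the loss $Q$, and the telescoping identity~\eqref{eq:telescoping} as before. The Cauchy--Schwarz step~\eqref{eq:step-cs} now reads
\begin{equation*}
|\ip{f}{u_{j-1}-u_j}|\leq\ip{f}{(I-K_j)f}^{1/2}\bigl(\pnorm{u_{j-1}}^2-\pnorm{u_j}^2\bigr)^{1/2},
\end{equation*}
since $\ip{u_{j-1}}{(I-K_j)u_{j-1}}\leq\ip{u_{j-1}}{(I-K_j^2)u_{j-1}}$. Summing over $j$ as in~\eqref{eq:sweep-cs} gives
\begin{equation*}
1-\sqrt{1-Q}\leq\{d\,\ip{f}{(I-P_{\RCW})f}\,Q\}^{1/2}.
\end{equation*}
The same monotonicity argument as in~\eqref{eq:form-bound} then yields
\begin{equation*}
\ip{f}{(I-P_{\RCW})f}\geq c:=\frac{\eta_{\DCW}}{d(2-\eta_{\DCW})}.
\end{equation*}

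\textbf{Step 3: spectral endpoint and cost ratio.} Averaging Step 1 gives $P_{\RCW}\succeq0$, while Step 2 gives $P_{\RCW}\preceq(1-c)I$ on $L^2_0(\Pi)$. Since $c\leq1/d<1$, the relevant spectrum lies in $[0,1-c]$. Self-adjointness then gives $\opnorm{P_{\RCW}-\Pi}\leq1-c$, which is the first inequality in~\eqref{eq:positive-bound}. Rearranging,
\begin{equation*}
\frac{\eta_{\DCW}}{d\,\eta_{\RCW}}\leq2-\eta_{\DCW}\leq2,
\end{equation*}
which is the cost bound.

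\textbf{Main point to check.} The only delicate issue is confirming that the earlier proof used Assumption~\ref{assm:gbc} solely through~\eqref{eq:dissipation}. That proof never uses $K_jP_j=P_j$ or $\opnorm{K_j-P_j}$ directly, so the substitution is valid. One should also note that the lower spectral endpoint $0$ never competes with $1-c$.
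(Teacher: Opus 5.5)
Your proposal is correct and matches the paper's argument: the paper likewise obtains $K_j\succeq0$ and $I-K_j^2\succeq I-K_j$ from the spectral theorem, and notes that these are the only two inequalities used in the proof of Theorem~\ref{thm:comparison}, with $1-\lambda_0$ replaced by one. Your Step~3 spells out the spectral-endpoint check (lower endpoint $0$ in place of $-\lambda_0$), which the paper leaves implicit.
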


For example, local data augmentation kernels are positive semidefinite. Conditional on $x_{-j}$, let a joint law of $(X_j,Z)$ have the required full conditional as its $X_j$ marginal. Draw $Z$ given $X_j$, followed by a new $X_j$ given $Z$. At stationarity, conditional independence of the old and new $X_j$ given $Z$ gives
\begin{equation*}
\ip{f}{K_jf}
=\mathbb E\left[\mathbb E\{f(X)\mid Z,X_{-j}\}^2\right]\geq0.
\end{equation*}
Likewise, if $L_j$ is a reversible conditional update, using two successive steps gives $K_j=L_j^2$ and $\ip{f}{K_jf}=\|L_jf\|_\Pi^2\geq0$.

\section{Discussion}\label{sec:discussion}

The comparison results extend the bounds available for Gibbs projections to general reversible component-wise updates and sharpen the quantitative comparisons in the earlier joint paper \citep{kwon2026solidarity}. In both directions, the lower gap bound is linear in the gap of the other scanning rule and has dimension dependence $d^{-1}$.

For the cost ratio of random scan to deterministic scan, the constant $2/(1-\lambda_0)$ is asymptotically sharp for every fixed $\lambda_0\in[0,1)$. The Gaussian overrelaxation example proves this with a fixed local contraction parameter. In the opposite direction, Theorem~\ref{thm:forward-comparison} gives a cost bound of order $d^2/(1-\lambda_0)$. The Metropolis pyramid attains these joint orders uniformly over the contraction parameter. This establishes sharpness of the parameter dependence.

Negative local spectral values explain why the contraction parameter appears in the random-to-deterministic cost bound. Positive semidefinite local updates satisfy the factor-two bound even without Assumption~\ref{assm:gbc}. In the Gaussian example attaining $2/(1-\lambda_0)$, the conditional linear mode has eigenvalue $-\lambda_0$. On this mode, the local inequality in~\eqref{eq:dissipation} is an equality.

The costs studied here are based on the $L^2$ norm gap. This gap does not determine total-variation mixing times uniformly over starting states. For example, product Gibbs targets have equal costs on the scale considered here, while random scan has a coupon-collector obstruction to updating all coordinates from a point initialization \citep[Section~2]{roberts2015surprising}.

\paragraph{Acknowledgments.}
I am grateful to my advisors, Galin Jones and Qian Qin, for their guidance and many helpful discussions throughout the development of this work.

\paragraph{Use of generative AI.}
This project was initiated by the human author, who proposed studying whether the quantitative comparison between deterministic-scan and random-scan component-wise Markov chains could be substantially sharpened from the earlier joint work \citep{kwon2026solidarity}. Motivated by squared-gap quantities arising in related quantum-algorithmic considerations, the human author proposed using squared norm loss to study the comparison. Generative AI assisted in developing this idea into the principal proof strategy, including the telescoping argument along a deterministic sweep that leads to the main spectral-gap bound. It also assisted with related derivations, examples, manuscript drafting, revisions and formal verification of the mathematical results in Lean. The human author determined the research direction, checked the relevant literature, scrutinized and revised the AI-generated arguments, verified the mathematical claims and computations, and takes full responsibility for the content and correctness of the article.

\appendix

\section{Gaussian scan comparisons}\label{app:gaussian}

We prove Propositions~\ref{prop:gaussian} and~\ref{prop:gaussian-mh} together. Throughout this appendix, let $Q=Q_d$ from~\eqref{eq:gaussian-target} and consider~\eqref{eq:overrelaxed-update} with $\lambda\in[0,1)$. At $\lambda=0$, these are Gibbs updates.

\subsection{The norm of a Gaussian transition}

We first record the Gaussian norm identity used below. Suppose $X$ and $Y$ are jointly Gaussian with the same nondegenerate centered marginal distribution. Transform each marginal to have identity covariance, then apply orthogonal transformations so that their cross-covariance is diagonal with entries $s_1,\ldots,s_d\in[0,1]$. In the resulting coordinates, the pairs $(X_i,Y_i)$ are independent and have correlations $s_i$.

For normalized Hermite polynomials, the Gaussian generating function gives
\begin{equation*}
\mathbb E[h_m(Y_i)\mid X_i]=s_i^m h_m(X_i).
\end{equation*}
The products of these polynomials form orthonormal bases for the two Gaussian marginals \citep{Janson1997Gaussian}. Therefore the singular values of conditional expectation on nonconstant products are $\prod_i s_i^{m_i}$ with $\sum_i m_i\geq1$. Their largest value is $\max_i s_i$ and is attained by a linear function. This proves that, for a stationary Gaussian transition $K$, the norm $\|K-\Pi\|$ is determined by its action on centered linear functions.

For $\Pi=N(0,Q^{-1})$, write $f_a(x)=a^\top Qx$. Then
\begin{equation*}
\|f_a\|_\Pi^2=a^\top Qa.
\end{equation*}
If $Kf_a=f_{Ba}$, the identity just proved gives
\begin{equation}\label{eq:gaussian-norm}
\|K-\Pi\|^2
=\sup_{a\ne0}\frac{(Ba)^\top Q(Ba)}{a^\top Qa}
=:\|B\|_Q^2.
\end{equation}
No statement about eigenvalues of a nonreversible $B$ is used here.

\subsection{Local contraction and the deterministic sweep}

Conditional on $x_{-j}$, the residual $x_j-\mu_j$ is standard normal. Under~\eqref{eq:overrelaxed-update}, its Hermite polynomial of degree $m$ has multiplier $(-\lambda)^m$. Expanding a square-integrable function in these polynomials, with coefficients depending on $x_{-j}$, shows that
\begin{equation}\label{eq:overrelaxed-contraction}
\|K_j-P_j\|=\sup_{m\geq1}|(-\lambda)^m|=\lambda.
\end{equation}
Equality is attained by the residual itself when $\lambda>0$.

The conditional mean of the new coordinate is $x_j-(1+\lambda)(Qx)_j$. Consequently,
\begin{equation}\label{eq:gaussian-coordinate}
K_jf_a=f_{B_ja},
\qquad B_j=I-(1+\lambda)e_je_j^\top Q.
\end{equation}
Write $Q=I+L+L^\top$, where $L$ is strictly lower triangular, and put
\begin{equation*}
M=\frac1{1+\lambda}I+L.
\end{equation*}
The successive coordinate equations give the classical Gauss--Seidel iteration matrix when $\lambda=0$ and the successive overrelaxation (SOR) matrix with relaxation parameter $\omega=1+\lambda$ in general \citep[Sections~3.1 and~3.3]{FoxParker2017}. In our notation,
\begin{equation}\label{eq:gauss-seidel}
B=B_d\cdots B_1=I-M^{-1}Q.
\end{equation}
Indeed, if $b=Ba$ and $v=a-b$, then the $j$th update uses the already updated coordinates below $j$ and the original coordinates above $j$. It follows that $v_j=(1+\lambda)\{(Qa)_j-\sum_{i<j}Q_{ji}v_i\}$, which is exactly $Mv=Qa$.

Since $M+M^\top-Q=(1-\lambda)I/(1+\lambda)$, direct expansion yields
\begin{equation}\label{eq:gs-energy}
Q-B^\top QB
=QM^{-\top}(M+M^\top-Q)M^{-1}Q
=\frac{1-\lambda}{1+\lambda}QM^{-\top}M^{-1}Q.
\end{equation}
Thus
\begin{equation}\label{eq:gs-eigenvalue}
\begin{aligned}
1-\|B\|_Q^2
&=\frac{1-\lambda}{1+\lambda}
\inf_{a\ne0}\frac{\|M^{-1}Qa\|_2^2}{a^\top Qa}\\
&=\frac{1-\lambda}{1+\lambda}
\frac1{\|Q^{-1/2}M\|_2^2}.
\end{aligned}
\end{equation}
The second equality follows by setting $z=M^{-1}Qa$. The denominator becomes $z^\top M^\top Q^{-1}Mz$. Every local update is Gaussian, so a deterministic sweep is a stationary Gaussian transition. Equation~\eqref{eq:gaussian-norm} therefore gives $\eta_{\DCW}=1-\|B\|_Q$.

\subsection{Asymptotic evaluation and sharpness}

Let $\epsilon_d=1-\cos(\pi/(d+1))$, the smallest eigenvalue of $Q_d$. Let $S$ be the lower shift matrix. Then
\begin{equation*}
Q=I-\tfrac12(S+S^\top),
\qquad
M=\frac{1-\lambda}{2(1+\lambda)}I+\tfrac12(I-S).
\end{equation*}
The identity
\begin{equation*}
(I-S)(I-S)^\top=2Q-e_1e_1^\top\preceq2Q
\end{equation*}
implies $\|Q^{-1/2}(I-S)\|_2\leq\sqrt2$. The triangle inequality and its reverse form give
\begin{equation}\label{eq:gs-sandwich}
\left|\|Q^{-1/2}M\|_2-
\frac{1-\lambda}{2(1+\lambda)\sqrt{\epsilon_d}}\right|
\leq\frac1{\sqrt2}.
\end{equation}
For fixed $\lambda<1$, equations~\eqref{eq:gs-eigenvalue} and~\eqref{eq:gs-sandwich} give
\begin{equation*}
1-\|B\|_Q^2\sim\frac{4(1+\lambda)}{1-\lambda}\epsilon_d.
\end{equation*}
Since $\|B\|_Q\to1$, this implies
\begin{equation}\label{eq:gaussian-deterministic-asymptotic}
\eta_{\DCW}
=\frac{1-\|B\|_Q^2}{1+\|B\|_Q}
\sim\frac{2(1+\lambda)}{1-\lambda}\epsilon_d.
\end{equation}

To bound the random-scan gap, choose a nonzero $a$ with $Qa=\epsilon_da$. Averaging~\eqref{eq:gaussian-coordinate} gives
\begin{equation*}
P_{\RCW}f_a
=\left(1-\frac{(1+\lambda)\epsilon_d}{d}\right)f_a.
\end{equation*}
The displayed eigenvalue is positive, so
\begin{equation}\label{eq:gaussian-random-upper}
\eta_{\RCW}\leq\frac{(1+\lambda)\epsilon_d}{d}.
\end{equation}
Together with Theorem~\ref{thm:comparison}, this gives
\begin{equation}\label{eq:gaussian-cost-sandwich}
\frac{\eta_{\DCW}}{(1+\lambda)\epsilon_d}
\leq\frac{\eta_{\RCW}^{-1}}{d\eta_{\DCW}^{-1}}
\leq\frac{2-\eta_{\DCW}}{1-\lambda}.
\end{equation}
Both outer expressions converge to $2/(1-\lambda)$ by~\eqref{eq:gaussian-deterministic-asymptotic}. This proves Proposition~\ref{prop:gaussian-mh}. Setting $\lambda=0$ proves Proposition~\ref{prop:gaussian}.

The same calculation gives
\begin{equation*}
\eta_{\RCW}\sim\frac{(1+\lambda)\epsilon_d}{d},
\qquad
\frac{\eta_{\RCW}}
{(1-\lambda)\eta_{\DCW}/\{d(2-\eta_{\DCW})\}}\longrightarrow1.
\end{equation*}
Thus the refined bound itself is asymptotically attained. Its lower bound comes from the independently proved main theorem, while the matching upper bound comes from the linear function above.

For a varying $\lambda=\lambda_d$, the same relative estimates hold whenever $\sqrt{\epsilon_d}/(1-\lambda_d)\to0$. Since $\epsilon_d\sim\pi^2/(2d^2)$, this is equivalent to $d(1-\lambda_d)\to\infty$. In this regime,
\begin{equation*}
\frac{\eta_{\RCW}^{-1}}{d\eta_{\DCW}^{-1}}
\sim\frac2{1-\lambda_d}.
\end{equation*}

\section{The pyramid sweep}\label{app:pyramid}

We prove the deterministic-scan parts of Propositions~\ref{prop:pyramid} and~\ref{prop:metropolis-pyramid} simultaneously. For the Gibbs sampler take $\lambda=0$. For the Metropolis-within-Gibbs sampler take $\lambda\in(0,1)$. Define
\begin{equation*}
a=\frac{1+\lambda}{2},\qquad
b=\frac{1-\lambda}{2},\qquad
\beta=\frac ba=\frac{1-\lambda}{1+\lambda}.
\end{equation*}
On the two entries with indices $0$ and $j$, $K_j$ acts by $\left(\begin{smallmatrix}b&a\\a&b\end{smallmatrix}\right)$. It leaves the other entries unchanged. Since $\Pi$ is uniform, for any vector of function values $v\in\R^{d+1}$,
\begin{equation*}
\pnorm{v}^2=\frac{\opnorm{v}_2^2}{d+1},\qquad\pnorm{v}=\frac{\opnorm{v}_2}{\sqrt{d+1}}.
\end{equation*}
The normalization cancels in operator-norm ratios.

\subsection{An exact loss identity}

Let $x=(x_0,\ldots,x_d)$ be the input vector of function values, and write $y=K_d\cdots K_1x$. Define the successive values at the central state by
\begin{equation}\label{eq:pyramid-recurrence}
s_0=x_0,\qquad s_j=b s_{j-1}+a x_j,\qquad j\in[d].
\end{equation}
When update $j$ occurs, the leaf value $x_j$ has not previously changed. Thus $y_0=s_d$ and $y_j=a s_{j-1}+b x_j$.

Put $\delta_j=s_j-s_{j-1}$ and $E(s)=\sum_{j=1}^d\delta_j^2$. The eigenvalues of the two-entry update are one and $-\lambda$. Its squared Euclidean norm loss is therefore $\tfrac12(1-\lambda^2)(s_{j-1}-x_j)^2$. Since $\delta_j=a(x_j-s_{j-1})$, summing the losses gives
\begin{equation}\label{eq:pyramid-energy}
\opnorm{x}_2^2-\opnorm{y}_2^2
=2\beta E(s).
\end{equation}
The input can be recovered from the path $s$ by
\begin{equation}\label{eq:pyramid-recover}
x_0=s_0,\qquad x_j=s_j+\beta(s_j-s_{j-1}),\quad j\in[d].
\end{equation}
This is an invertible linear transformation. If $D$ denotes the difference map $(Ds)_0=0$ and $(Ds)_j=s_j-s_{j-1}$ for $j\geq1$, then $x=s+\beta Ds$ and $\opnorm{Ds}_2^2=E(s)$.

\subsection{Comparison with the path Laplacian}

Let $n=d+1$, and let $C_n=I-n^{-1}\mathbf1\mathbf1^\top$ be Euclidean centering. The symmetric matrix associated with $E(s)=\sum_{j=1}^{n-1}(s_j-s_{j-1})^2$ has smallest positive eigenvalue
\begin{equation}\label{eq:path-gap}
\nu_n=2-2\cos(\pi/n)=4\sin^2(\pi/(2n)).
\end{equation}
This matrix is the path Laplacian $L_n$. Its eigenvalues are $2-2\cos(k\pi/n)$, $k=0,\ldots,n-1$, with eigenvector entries $\cos\{k\pi(i+1/2)/n\}$ for $i=0,\ldots,n-1$ \citep[Theorem~5.6.1, pp.~51--52]{Spielman2025SAGT}. In particular, the $k=0$ mode is constant and the $k=1$ mode is nonzero and orthogonal to constants. Since
\begin{equation*}
 s^\top L_n s=\sum_{j=1}^{n-1}(s_j-s_{j-1})^2=E(s),
\end{equation*}
the $k=1$ mode attains $E(s)=\nu_n\opnorm{s}_2^2$. The Courant--Fischer characterization \citep[Theorem~2.0.1, p.~21]{Spielman2025SAGT} gives $E(s)\geq\nu_n\opnorm{C_ns}_2^2$ for every $s$ by applying the Rayleigh inequality to $C_ns$, which is orthogonal to the constant mode. These are the finite-path inputs used below. Thus
\begin{equation}\label{eq:path-poincare}
\opnorm{C_ns}_2\leq\nu_n^{-1/2}E(s)^{1/2}.
\end{equation}

For a centered input $x$, equations~\eqref{eq:pyramid-recover} and~\eqref{eq:path-poincare} imply
\begin{equation*}
\begin{aligned}
\opnorm{x}_2
&=\opnorm{C_n(s+\beta Ds)}_2\\
&\leq\opnorm{C_ns}_2+\beta\opnorm{C_nDs}_2\\
&\leq(\nu_n^{-1/2}+\beta)E(s)^{1/2}.
\end{aligned}
\end{equation*}
Consequently, if $\tau_{d,\lambda}=1-\opnorm{P_{\DCW}-\Pi}^2$, the exact identity~\eqref{eq:pyramid-energy} yields the lower bound
\begin{equation}\label{eq:pyramid-loss-lower}
\tau_{d,\lambda}\geq
\frac{2\beta\nu_n}{(1+\beta\sqrt{\nu_n})^2}.
\end{equation}

For an upper bound, choose a nonzero centered first path eigenvector $s$, so $E(s)=\nu_n\opnorm{s}_2^2$. Set $x=C_n(s+\beta Ds)$ and $c=n^{-1}\mathbf1^\top(s+\beta Ds)$. Then $\widetilde s=s-c\mathbf1$ satisfies $x=\widetilde s+\beta D\widetilde s$, so it is the path associated with $x$ by~\eqref{eq:pyramid-recover}, and $E(\widetilde s)=E(s)$. The reverse triangle inequality gives
\begin{equation*}
\opnorm{x}_2\geq
\opnorm{s}_2-\beta\opnorm{C_nDs}_2
\geq(1-\beta\sqrt{\nu_n})\opnorm{s}_2.
\end{equation*}
For all sufficiently large $d$, $\beta\sqrt{\nu_n}<1$ and $x\ne0$. Using this input $x$ and its path $\widetilde s$ in~\eqref{eq:pyramid-energy} gives
\begin{equation}\label{eq:pyramid-loss-upper}
\tau_{d,\lambda}\leq
\frac{2\beta\nu_n}{(1-\beta\sqrt{\nu_n})^2}.
\end{equation}
Since $0<\beta\leq1$, we have $\beta\sqrt{\nu_n}\to0$ uniformly over $\lambda\in[0,1)$. The upper and lower bounds therefore imply, uniformly over this range,
\begin{equation*}
\tau_{d,\lambda}\sim2\beta\nu_{d+1}
\sim\frac{2\beta\pi^2}{d^2}.
\end{equation*}
Therefore
\begin{equation*}
\eta_{\DCW}
=\frac{\tau_{d,\lambda}}{1+\sqrt{1-\tau_{d,\lambda}}}
\sim\frac{\beta\pi^2}{d^2}.
\end{equation*}
Taking $\lambda=0$ proves~\eqref{eq:pyramid-gaps}. Taking $\lambda\in(0,1)$ proves~\eqref{eq:metropolis-pyramid-gaps}, including the stated uniformity. This completes the deterministic-scan calculations for the Gibbs and Metropolis-within-Gibbs examples.

\bibliographystyle{plainnat}
\begingroup
\small
\bibliography{refs}
\endgroup

\end{document}